\let\old@tocline\@tocline
\let\section@tocline\@tocline
\newcommand{\subsection@dotsep}{4.5}
\newcommand{\subsubsection@dotsep}{4.5}
     \leaders\hbox{$\m@th
        \mkern \subsection@dotsep mu\hbox{.}\mkern \subsection@dotsep mu$}\hfill
\let\subsection@tocline\@tocline
\let\@tocline\old@tocline
     \leaders\hbox{$\m@th
        \mkern \subsubsection@dotsep mu\hbox{.}\mkern \subsubsection@dotsep mu$}\hfill
\let\subsubsection@tocline\@tocline
\let\@tocline\old@tocline
\let\old@l@subsection\l@subsection
\let\old@l@subsubsection\l@subsubsection
\def\@tocwriteb#1#2#3{%
  \begingroup
    \@xp\def\csname #2@tocline\endcsname##1##2##3##4##5##6{%
      \ifnum##1>\c@tocdepth
      \else \sbox\z@{##5\let\indentlabel\@tochangmeasure##6}\fi}%
    \csname l@#2\endcsname{#1{\csname#2name\endcsname}{\@secnumber}{}}%
  \endgroup
  \addcontentsline{toc}{#2}%
    {\protect#1{\csname#2name\endcsname}{\@secnumber}{#3}}}%
\newlength{\@tocsectionindent}
\newlength{\@tocsubsectionindent}
\newlength{\@tocsubsubsectionindent}
\newlength{\@tocsectionnumwidth}
\newlength{\@tocsubsectionnumwidth}
\newlength{\@tocsubsubsectionnumwidth}
\newcommand{\settocsectionnumwidth}[1]{\setlength{\@tocsectionnumwidth}{#1}}
\newcommand{\settocsubsectionnumwidth}[1]{\setlength{\@tocsubsectionnumwidth}{#1}}
\newcommand{\settocsubsubsectionnumwidth}[1]{\setlength{\@tocsubsubsectionnumwidth}{#1}}
\newcommand{\settocsectionindent}[1]{\setlength{\@tocsectionindent}{#1}}
\newcommand{\settocsubsectionindent}[1]{\setlength{\@tocsubsectionindent}{#1}}
\newcommand{\settocsubsubsectionindent}[1]{\setlength{\@tocsubsubsectionindent}{#1}}
\renewcommand{\l@section}{\section@tocline{1}{\@tocsectionvskip}{\@tocsectionindent}{}{\@tocsectionformat}}%
\renewcommand{\l@subsection}{\subsection@tocline{1}{\@tocsubsectionvskip}{\@tocsubsectionindent}{}{\@tocsubsectionformat}}%
\renewcommand{\l@subsubsection}{\subsubsection@tocline{1}{\@tocsubsubsectionvskip}{\@tocsubsubsectionindent}{}{\@tocsubsubsectionformat}}%
\newcommand{\@tocsectionformat}{}
\newcommand{\@tocsubsectionformat}{}
\newcommand{\@tocsubsubsectionformat}{}
\def\csname toc@1format\endcsname{\@tocsectionformat}
\def\csname toc@2format\endcsname{\@tocsubsectionformat}
\def\csname toc@3format\endcsname{\@tocsubsubsectionformat}
\newcommand{\settocsectionformat}[1]{\renewcommand{\@tocsectionformat}{#1}}
\newcommand{\settocsubsectionformat}[1]{\renewcommand{\@tocsubsectionformat}{#1}}
\newcommand{\settocsubsubsectionformat}[1]{\renewcommand{\@tocsubsubsectionformat}{#1}}
\newlength{\@tocsectionvskip}
\newcommand{\settocsectionvskip}[1]{\setlength{\@tocsectionvskip}{#1}}
\newlength{\@tocsubsectionvskip}
\newcommand{\settocsubsectionvskip}[1]{\setlength{\@tocsubsectionvskip}{#1}}
\newlength{\@tocsubsubsectionvskip}
\newcommand{\settocsubsubsectionvskip}[1]{\setlength{\@tocsubsubsectionvskip}{#1}}
\patchcmd{\tocsection}{\indentlabel}{\makebox[\@tocsectionnumwidth][l]}{}{}
\patchcmd{\tocsubsection}{\indentlabel}{\makebox[\@tocsubsectionnumwidth][l]}{}{}
\patchcmd{\tocsubsubsection}{\indentlabel}{\makebox[\@tocsubsubsectionnumwidth][l]}{}{}
\newcommand{\@sectypepnumformat}{}
\renewcommand{\contentsline}[1]{%
  \expandafter\let\expandafter\@sectypepnumformat\csname @toc#1pnumformat\endcsname%
  \csname l@#1\endcsname}
\newcommand{\@tocsectionpnumformat}{}
\newcommand{\@tocsubsectionpnumformat}{}
\newcommand{\@tocsubsubsectionpnumformat}{}
\newcommand{\setsectionpnumformat}[1]{\renewcommand{\@tocsectionpnumformat}{#1}}
\newcommand{\setsubsectionpnumformat}[1]{\renewcommand{\@tocsubsectionpnumformat}{#1}}
\newcommand{\setsubsubsectionpnumformat}[1]{\renewcommand{\@tocsubsubsectionpnumformat}{#1}}
\renewcommand{\@tocpagenum}[1]{%
  \hfill {\mdseries\@sectypepnumformat #1}}
\let\oldappendix\appendix
\renewcommand{\appendix}{%
  \leavevmode\oldappendix%
  \addtocontents{toc}{%
    \protect\settowidth{\protect\@tocsectionnumwidth}{\protect\@tocsectionformat\sectionname\space}%
    \protect\addtolength{\protect\@tocsectionnumwidth}{2em}}%
}
\let\oldtableofcontents\tableofcontents
\renewcommand{\tableofcontents}{%
  \vspace*{-\linespacing}% Default gap to top of CONTENTS is \linespacing.
  \oldtableofcontents}
\numberwithin{equation}{section}
\theoremstyle{plain}
\newtheorem{theorem}{Theorem}[section]
\newtheorem{proposition}[theorem]{Proposition}
\newtheorem{lemma}[theorem]{Lemma}
\theoremstyle{definition}
\newtheorem{definition}[theorem]{Definition}
\newcommand\R{\mathbb R}
\def\Rz {\mathbb{R}}
\newcommand\M{\mathbb M}
\newcommand\N{\mathbb N}
\newcommand\ep{\varepsilon}
\newcommand\wk{\rightharpoonup}
\newcommand{\C}{\mathbb{C}}
\newcommand\be[1]{\begin{equation}\label{#1}}
\newcommand\ee{\end{equation}}
\newcommand\ba[1]{\begin{align}\label{#1}}
\newcommand\ea{\end{align}}
\newcommand\bas{\begin{align*}}
\newcommand\eas{\end{align*}}
\newcommand\nn{\nonumber}
\newcommand\mtwo{\mathbb{M}^{2\times 2}}
\newcommand\HH{\mathcal{H}}
\newcommand\EEE{\color{black}}
\newcommand \MMM{\color{black}}
\newcommand\Var{{\rm Var}}
\title[Derivation of a heteroepitaxial thin-film model]{Derivation of a heteroepitaxial thin-film model}
\author[E. Davoli] {Elisa Davoli} 
\address[Elisa Davoli]{Department of Mathematics\\ University of Vienna\\Oskar-Morgenstern Platz 1\\1090 Vienna (Austria)}
\email[E. Davoli]{elisa.davoli@univie.ac.at}
\author[P.Piovano] {Paolo Piovano} 
\address[Paolo Piovano]{Department of Mathematics\\ University of Vienna\\Oskar-Morgenstern Platz 1\\1090 Vienna (Austria)}
\email[P. Piovano]{paolo.piovano@univie.ac.at}
\subjclass[2010]{35J50, 49J10, 74K35}
\keywords{Thin films, sharp-interface model, heteroepitaxy, $\Gamma$-convergence}
\begin{document} 
\vskip .2truecm
\begin{abstract}
\small{\MMM A variational model for epitaxially-strained thin films on \MMM rigid \EEE substrates is derived both by $\Gamma$-convergence from a transition-layer setting, and by relaxation from a sharp-interface description available in the literature for regular configurations. 
The model is characterized by a configurational energy that accounts for both the competing mechanisms responsible for the film shape. On the one hand, the lattice mismatch between the film and the substrate generate large stresses, and corrugations may be present because film atoms move to release the elastic energy. On the other hand, flatter profiles may be preferable to minimize the surface energy. Some first regularity results are presented for energetically-optimal film profiles. \EEE}
\end{abstract}
\maketitle

%\tableofcontents
\section{Introduction}

\MMM In this paper we introduce a variational model for describing heteroepitaxial growth of thin-films on a rigid substrate.\EEE

%\subsection{The objectives}  The purpose of this paper is threefold. First, in Theorem \ref{thm:justification} we analytically derive the variational model \eqref{filmenergyfinal} both as $\Gamma$-limit of the \emph{transition-layer models} introduced in \cite{S2}, and as relaxation of their associated \emph{sharp-interface model} (see Subsections \ref{subsec:sharp model} and \ref{subsec:transition model}). Second, we show that minimizing configurations of $\mathcal{F}$ satisfy geometrical properties which include the Young-Dupr\'e law for angles $\theta\in[0,\pi/2]$ (see  Theorem \ref{thm:YDlaw}). Third, we assess the regularity of minimizing configurations whose all regularity properties are listed in Theorem \ref{thm:regularity}. 

The first rigorous validation of a thin-film model as $\Gamma$-limit of \MMM a \EEE transition-layer model \MMM introduced in \EEE \cite{S2} was performed in the seminal paper \cite{FFLM2}.   Our analysis moves ahead from  \cite{FFLM2}, as \MMM in our energy functionals \EEE we not only consider the \MMM surface-energy contribution due to the \EEE free profile of the film and the surface of the substrate, but also \MMM that related to \EEE the interface between the film and the substrate, and we take into account the (possible) different elastic properties of the film and substrate materials. This is particularly important to fully treat the often encountered situation of \emph{heteroepitaxy}, i.e., the deposition of a material different from the one of the substrate. 

In order to describe our model, we need to introduce some notation. Following \cite{S2} we regard the substrate and the film as continua, we work in the framework of the \emph{theory of small deformations} \MMM in \EEE \emph{linear elasticity}, and, as in \cite{FFLM2}, we restrict our analysis to two-dimensional profiles (or three-dimensional configurations with planar symmetry). The interface between the film and the substrate is always assumed to be contained in the $x$-axis and the film thickness is measured by the height function $h:[a,b]\to[0,+\infty)$ with $b>a>0$. The subgraph $$\Omega_h:=\{(x,y):\,a<x<b,\,y<h(x)\},$$ is the region occupied by the film and the substrate material, whereas the \emph{graph} 
$$\Gamma_h:=\partial\Omega_h\cap\left((a,b)\times\R\right)$$
 of the height function $h$ represents the film profile. The elastic deformations of the film are encoded by the \emph{material displacement} $u:\Omega_h\to \R^2$, and its associated \emph{strain-tensor}, i.e., the symmetric part of the gradient of $u$, denoted by 
$$Eu:={{\rm sym}}\nabla u.$$
In order to account for non-regular profiles, as in \cite{FFLM2} the height function is assumed to be lower semicontinuous and with bounded pointwise variation. We denote by 
$${\tilde{\Gamma}_h}:=\partial\overline{\Omega}_h\cap\left((a,b)\times\R\right),$$
and by $\Gamma_h^{cut}$ the set of \emph{cuts} in the profile of $h$, namely $\Gamma_h^{cut}:={\Gamma}_h\setminus \tilde{\Gamma}_h$.

As previously mentioned, elasticity must be included in the model as it plays a major role in heteroepitaxy. Large stresses are in fact induced  in the film by the lattice mismatch between the film and the substrate materials \cite{FG}. We introduce a parameter $e_0\geq 0$ to represent such lattice mismatch and, as in \cite{FFLM2}, we assume that the minimum of the energy is reached at
$$E_0(y):=\begin{cases}e_0\, (\bf{e_1}\odot{\bf{e_1}})&\text{if }y\geq 0\\
0&\text{otherwise},
\end{cases}$$
where $({\bf e_1}, {\bf e_2})$ is the standard basis of $\R^2$. In the following we refer to $E_0$ as the \emph{mismatch strain}.  

The model considered in this paper is characterized by an energy functional $\mathcal{F}$, defined for any film configuration $(u,h)$ as
\begin{align}\label{filmenergyfinal}
 \mathcal{F}(u,h)\,=\,\int_{\Omega_h}W_{0}(y,&Eu(x,y)-E_0(y))\,dx\,dy\nonumber\\
&\qquad+\,\int_{\tilde{\Gamma}_h}\varphi(y)\,d\HH^1\,+\, \gamma_{\rm fs}(b-a)\,+\,2\gamma_{\rm f} \HH^1(\Gamma_h^{cut}),
\end{align}
 where the surface density $\varphi$ is given by
$$\varphi(y):=\begin{cases}\gamma_f&\text{if }y> 0,\\
 \min\{\gamma_f, \gamma_s-\gamma_{fs}\}&\text{otherwise,}\end{cases}$$
with 
\be{eq:ass-gammas}
\gamma_f>0,\quad \gamma_s>0,\quad\text{and}\quad\gamma_s-\gamma_{fs}\geq 0.
\ee
The elastic energy density \mbox{$W_0:\R\times \mtwo_{\rm sym}\to [0,+\infty)$} satisfies 
$$W_0(y,E):=\frac12 E:\C(y)E$$
for every $(y,E)\in \R\times \mtwo_{\rm sym}$. 
In the expression above $\C(y)$ represents the elasticity tensor, 
$$\C(y):=\begin{cases} \C_{f}&\text{if }y>0,\\
\C_s&\text{otherwise},
\end{cases}$$
and is assumed to satisfy
\be{eq:positive definite}E:\C(y)E>0\ee
for every $y\in\R$ and $E\in\mtwo_{\rm sym}$.
 The fourth-order tensors $\C_f$ and $\C_s$ are \MMM symmetric, positive-definite, and possibly different. \EEE Our model includes therefore the case of a different elastic behavior \MMM of \EEE the film and the substrate. %Notice also that $\Gamma_{h,{\rm cut}}$ appearing in the last term of \eqref{filmenergyfinal} represent the jumps part in the graph of $h$. We are in fact in general assuming that 

Energy functionals of the form \eqref{filmenergyfinal} appear in the study of \emph{Stress-Driven Rearrangement Instabilities  (SDRI)} \cite{Gr} and well represent the \emph{competition} between the \emph{roughening effect} of the elastic energy and the \emph{regularizing effect} of the surface energy that characterize the formation of such crystal microstructures (see \cite{FG,Ga,Gr} and \cite{FFLMi} for the related problem of crystal cavities). %Its minimization consists  in solving a \emph{free-boundary problem} as both the displacement $u$ and the set $\Gamma_h$ are unknowns of the problem. 

As already mentioned at the beginning of the introduction a similar functional to \eqref{filmenergyfinal} was derived  in \cite{FFLM2}  by $\Gamma$-convergence from the transition-layer model introduced in \cite{S2} in the case in which $\C_f=\C_s$, and $\gamma_{fs}=0$. We observe here that in \cite{FFLM2} the regularity of the local minimizers of such energy is studied for isotropic film and substrate in the case in which $\gamma_f\leq\gamma_s$, and the local minimizers  are shown to be smooth outside of finitely many \emph{cusps} and \emph{cuts} and to form zero contact angles with the substrate (see also \cite{BC,FFLMi}). In the same regime in \cite{FM} \emph{thresholds} for the film volume (dependent on the lattice mismatch), below which the flat configuration is an absolute minimizer or only a local minimizer, and below which minimizers are smooth, have been identified \MMM (see also \cite{Bo1,Bo2} for the anisotropic setting)\EEE. We point out that the functional in \cite{FFLM2}, when restricted to the regime $\gamma_f\leq\gamma_s$ did not present any discontinuity along the film/substrate interface contained in the $x$-axis. The same applies for the energy in \cite{FM}. In our more general setting, instead, \eqref{filmenergyfinal}  always presents a sharp discontinuity with respect to the elastic tensors. Additionally the relaxation results of this paper include the \emph{dewetting regime}, $\gamma_f>\gamma_s-\gamma_{fs}$, for which the surface tension \MMM also presents a sharp discontinuity. \EEE %of the film and the substrate along their interface contained in the $x$-axis.

Related SDRI models have been studied in \cite{BGZ,FPZ,GZ}. In \cite{FPZ} the existence and the shape of island profiles, which enforces the presence of nonzero contact angles, has been analyzed in the constraint of faceted profiles. In \cite{GZ} a mathematical justification of island nucleation was provided by deriving scaling laws for the minimal energy in terms of  $e_0$ and the film volume, and then extended in \cite{BGZ} to the situation of  unbounded domains, in the two regimes of small- and large-slope approximations for the profile function  $h$. Finally, the evolutionary problem for thin-film profiles has been studied in dimension two in \cite{FFLM} for the evolution driven by surface diffusion, and in \cite{P} for the growth in the evaporation-condensation case (see also \cite{DFL,FLL} for a related model describing vicinal surfaces in epitaxial growth). Recently the analysis of \cite{FFLM} has been extended to three dimensions in  \cite{FFLM3}. A complete analysis of the regularity of optimal profiles, as well as of contact angle conditions will be the subject of the companion paper \cite{davoli.piovano}.

The paper is organized as follows. \MMM In Section \ref{sec:mainresults} we introduce the mathematical setting and we rigorously state our main result (see Theorem \ref{thm:justification}). Section \ref{sec:justification} is devoted to the analytical derivation of the energy \eqref{filmenergyfinal} by relaxation and by $\Gamma$-convergence, respectively, from the sharp-interface and the transition-layer models. In  Section \ref{sec:regularity}  we present a first regularity analysis for the local minimizers of such energy. We first perform a volume penalization of the energy to allow more freedom in the admissible variations, and finally prove in our setting the \emph{internal-ball condition}, an idea first introduced in \cite{CL} and employed also in \cite{FFLMi,FFLM2}. \EEE

\section{\MMM Setting of the problem and \EEE main results}\label{sec:mainresults}

\subsection{Mathematical setting} \label{subset:mathset} In this subsection we introduce the main definitions and the notation used throughout the paper. %, and we state the main results of the paper. 
We begin by characterizing the admissible film profiles. The set  $AP$  of admissible film profiles in $(a,b)$ is denoted by
$$AP(a,b):=\{h: [a,b]\to[0,+\infty)\,:\,\textrm{$h$ is lower semicontinuous and $\textrm{Var}\,h<+\infty$}\},$$
where $\textrm{Var}\,h$ denotes the pointwise variation of $h$, namely,
\begin{align*}
\textrm{Var}\,h :=\sup\Big\{  &\sum^n_{i=1}|h(x_i)-h(x_{i-1})|\,:\\
&\qquad \qquad\textrm{$P:=\{x_1,\dots,x_n\}$ is a partition of $[a,b]$} \Big\}.
\end{align*}
We recall that for every lower semicontinuous function $h: [a,b]\to[0,+\infty)$, to have finite pointwise variation is equivalent to the condition 
$$\HH^1(\Gamma_h)<+\infty,$$
where
$$\Gamma_h:=\partial\Omega_h\cap\left((a,b)\times\R\right).$$
 For every  $h\in AP(a,b)$, and for every $x\in (a,b)$, consider the left and right limits
$$ h(x^\pm):=\lim_{z\to x^{\pm}} h(z).$$
We define
$$h^-(x):=\min\{h(x^+),h(x^-)\}=\liminf_{z\to x} h(z),$$
and 
$$h^+(x):=\max\{h(x^+),h(x^-)\}=\limsup_{z\to x} h(z).$$
In the following  ${\rm Int}(A)$ denotes the interior part of a set $A$.
Let us now recall some properties of height functions $h\in AP(a,b)$, regarding their graphs $\Gamma_h$, their subgraphs $\Omega_h$, the film and the substrate parts of the subgraph, \MMM namely \EEE
$$\Omega_h^+:=\Omega_h\cap\{y>0\}$$
and
$$ \Omega_h^-:=\Omega_h\cap\{y\leq 0\}$$ respectively, and the set 
\be{eq:Gamma-tilda}\tilde{\Gamma}_h:=\partial\bar{\Omega}_h\cap ((a,b)\times \R).\ee
%\begin{remark}
Any $h\in AP(a,b)$ satisfies the following assertions (see \cite[Lemma 2.1]{FFLM2}):
\begin{enumerate}
\item[1.] $\Omega_h^+$ has finite perimeter in $((a,b)\times \R)$,
\item[2.]  $\Gamma_h=\{(x,y):\,a<x<b,\,h(x)<y<h^+(x)\}$,
\item[3.]  $h^-$ is lower semicontinuous and ${\rm Int}\left(\overline{\Omega}\right)=\{(x,y):\,a<x<b,\, y<h^-(x)\}$,
\item[4.]  $\tilde{\Gamma}_h=\{(x,y):\,a<x<b,\,h^-(x)\leq y\leq h^+(x)\}$,
\item[5.]  $\Gamma_h$ and $\tilde{\Gamma}_h$ are connected.
\end{enumerate}
%\end{remark}

We now characterize various portions of $\Gamma_h$. To this aim we denote the \emph{jump} set of a function $h\in AP(a,b)$, i.e., the set of its profile discontinuities, by
\be{eq:def-S-h}J(h):=\{x\in (a,b):\,h^-(x)\neq h^+(x)\},\ee
whereas the set \MMM of \EEE \emph{vertical cuts} in the graph of $h$ is given by
\be{eq:def-S}C(h):=\{x\in (a,b):\,h(x)<h^-(x)\}.\ee
The graph $\Gamma_h$ of a height function $h$ is then characterized by the decomposition 
$$\Gamma_h=\Gamma_h^{jump}\sqcup\Gamma_h^{cut}\sqcup\Gamma_h^{graph},$$
where $\sqcup$ denotes the disjoint union, and 
\begin{align}
\nn\Gamma_h^{jump}&:=\overline{\{(x,y):\,x\in (a,b)\cap J(h),\, h^-(x)\leq y\leq h^+(x)\}},\\
\Gamma_h^{cut}&:=\{(x,y):\,x\in (a,b)\cap C(h),\, h(x)\leq y< h^-(x)\},\label{eq:cuts}\\
\nn\Gamma_h^{graph}&:=\Gamma_h\setminus(\Gamma_h^{jump}\cup\Gamma_h^{cut}). 
\end{align}
%and 
%$$\Gamma_h^{graph}:=\Gamma_h\setminus(\Gamma_h^{jump}\cup\Gamma_h^{cut}).$$

 \begin{figure}[htp]
\begin{center}
\includegraphics[width=12.5cm]{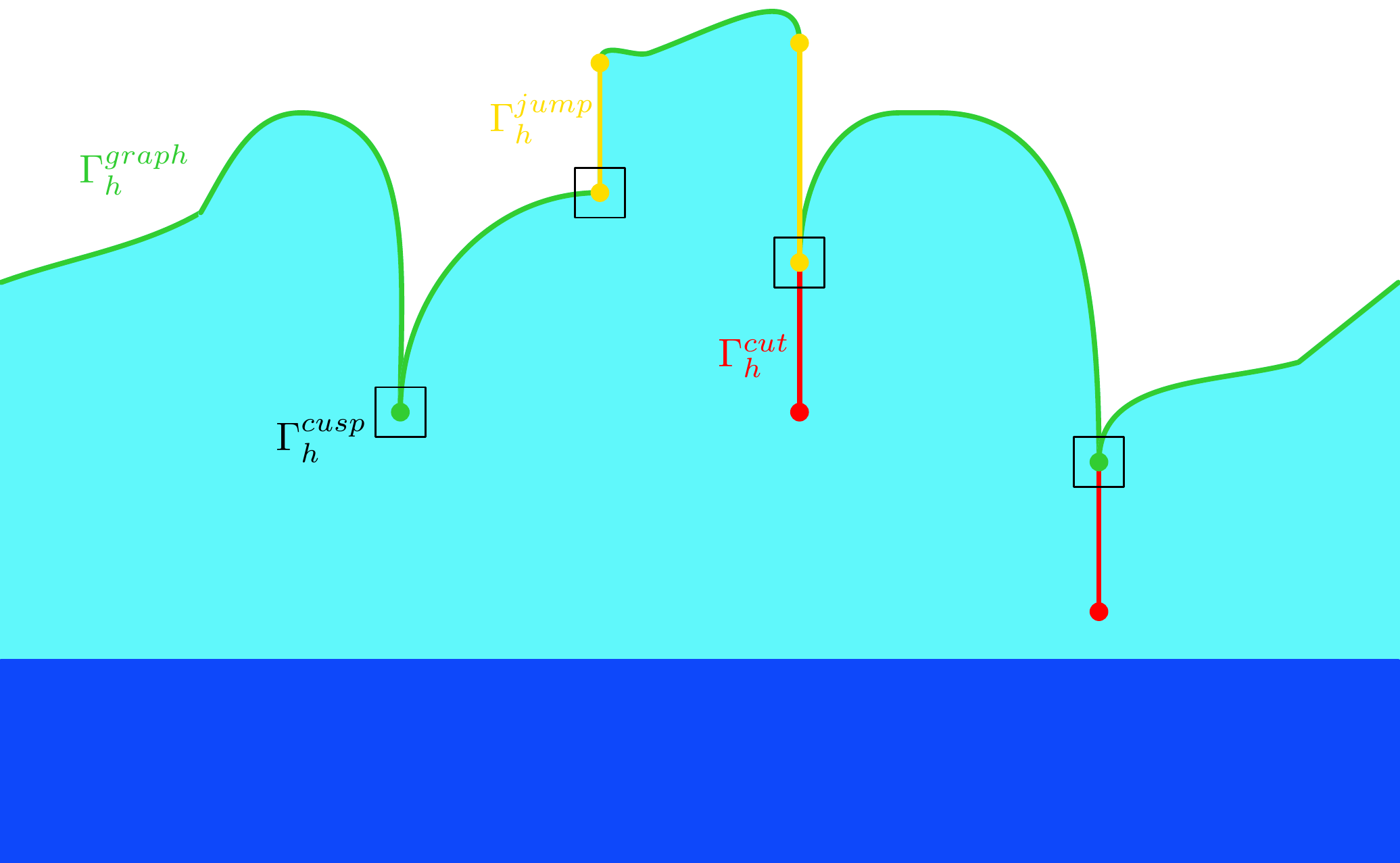}
\caption{In the figure above an admissible profile function $h$ is displayed. The portions of $\Gamma_h$ corresponding to $\Gamma_h^{graph}$, $\Gamma_h^{jump}$,  and $\Gamma_h^{cut}$  are represented with the colors green, yellow, and red respectively. The points in $\Gamma_h^{cusp}$ are marked by enclosing them within squares.}
\label{graphs}
\end{center}
\end{figure}
We observe that  $\Gamma_h^{graph}$ represents the regular part of the graph of $h$, whilst both $\Gamma_h^{jump}$ and $\Gamma_h^{cut}$ consist in (at most countable) unions of segments, corresponding to the \emph{jumps} and the \emph{cuts} in the graph of $h$, respectively (see Figure \ref{graphs}). Notice also that
$$\Gamma_h=\tilde{\Gamma}_h\sqcup \Gamma_h^{cut}.$$

\MMM Denoting by $h'_{-}(x)$ and  $h'_{+}(x)$ the left and right derivatives of $h$ in a point $x$, respectively, we \EEE identify  the set of \emph{cusps} in $\Gamma_h$ by
\begin{align*}
\Gamma_h^{cusp}:=\big\{ (x,h^-(x))\,:\,\,&\textrm{either $x\in J(h)$}\\
 &\textrm{or we have that $x\not\in J(h)$ with $h'_+(x)=+\infty$ or $h'_-(x)=-\infty$}\big\}
\end{align*}
(see Figure \ref{graphs}). % and denote by $\Gamma_h^{reg}$ the set 
%\be{eq:gamma-h-reg}
%\Gamma_h^{reg}:=\Gamma_h\setminus (\Gamma_h^{cut}\cup \Gamma_h^{cusp})=\tilde{\Gamma}_h\setminus \Gamma_h^{cusp}.
%\ee

For every $h\in AP(a,b)$ we indicate its set of of zeros by
$$Z_h:=\Gamma_h\cap\{x\in[a,b]\,:\,h(x)=0\}.$$

We now define the family $X$ of admissible film configurations as
$$X:=\{(u,h):\,\textrm{$u\in H^1_{\rm loc}(\Omega_h;\R^2)$ and $h\in AP(a,b)$}\}$$ 
and we endow $X$ with the following notion of convergence.

\begin{definition}
\label{def:conv-X}
We say that a sequence $\{(u_n,h_n)\}\subset X$ converges to $(u,h)\in X$, and we write $(u_n,h_n)\to (u,h)$ in $X$ if
\begin{enumerate}
\item[1.] $\sup_n \Var\, h_n<+\infty$,
\item[2.] $\R^2\setminus\Omega_{h_n}$ converges to $\R^2\setminus\Omega_h$ in the Hausdorff metric,
\item[3.] $u_n\wk u$ weakly in $H^1(\Omega';\R^2)$ for every $\Omega'\subset\subset\Omega_h$.
\end{enumerate}
\end{definition}
Let us also consider the following subfamily $X_{\rm Lip}$ of configurations with Lipschitz profiles, namely,
$$X_{\rm Lip}:=\{(u,h):\,u\in H^1_{\rm loc}(\Omega_h;\R^2),\,h\text{ is Lipschitz}\}.$$
We recall that the thin-film model analyzed in this paper is characterized by the energy $\mathcal{F}$ defined \MMM in \EEE \eqref{filmenergyfinal} \MMM and evaluated \EEE on configurations $(u,h)\in X$.

We state here the definition of \emph{$\mu$-local minimizers} of the energy $\mathcal{F}$. 
\begin{definition}
\label{def:local-min}
We say that a pair $(u,h)\in X$ is a $\mu$-local minimizer of the functional $\mathcal{F}$ if $\mathcal{F}(u,h)<+\infty$ and 
$$\mathcal{F}(u,h)\leq \mathcal{F}(v, g)$$
for every $(v,g)\in X$ satisfying $|\Omega_g^+|=|\Omega_h^+|$ and $|\Omega_g\Delta \Omega_h|\leq \mu$.
\end{definition}
\noindent Note that every global minimizer (with or without volume constraint) is a $\mu$-local minimizer.

\subsection{The sharp-interface and the  transition-layer models}\label{subsec:spenser models} We now recall classical thin-film models from the Literature. The sharp-interface model for epitaxy is characterized by a configurational energy  $\mathcal{F}_0$ that presents a discontinuous transition both in the elasticity tensors and in the surface tensions, and that encodes the abrupt change in materials across the film/substrate interface at the $x$-axis. We set 
\begin{align}\label{filmenergy}
\mathcal{F}_0(u,h):=\int_{\Omega_h}W_0(y, &Eu(x,y)-E_0(y))\,dx\,dy\nonumber\\
&\qquad+\int_{\Gamma_h}\varphi_0(y)\,d\HH^1\,+\, \gamma_{fs}\HH^1((a,b)\setminus Z_h)
\end{align}
for every $(u,h)\in X_{\rm Lip}$, where the energy density $\varphi_0:\R\to [0,+\infty)$ forces a sharp discontinuity at $\{y=0\}$, namely
$$\varphi_0(y):=\begin{cases}\gamma_f&\text{if }y> 0\\
\gamma_s&\text{if }y= 0,\end{cases}$$
for positive constants $\gamma_f$ and $\gamma_s$. The same energy functional has been considered in \cite{S2}, where it appears without the last term since in that framework $\gamma_{fs}$ is considered to be negligible. We notice that $\mathcal{F}$ and $\mathcal{F}_0$ differ only with respect to the surface energy, and that $\mathcal{F}$ is extended to the set $X$.

 Models presenting regularized discontinuities have been introduced in the Literature because more easy to implement numerically (see, e.g., \cite{S2}). They can be considered as an approximation of the sharp-interface functional $\mathcal{F}_0$ where %However these models are not only approximations of the sharp-interface model, but can well represent the case of \emph{graded-interface films} in which a layer of mixed materials is created between the film and the substrate to accommodate large lattice mismatches. 
the elastic tensors and/or the surface densities are regularized over a thin transition region of width $\delta > 0$ (see Figure \ref{filmdelta}). 
\begin{figure}[htp]
\begin{center}
\includegraphics[width=11cm]{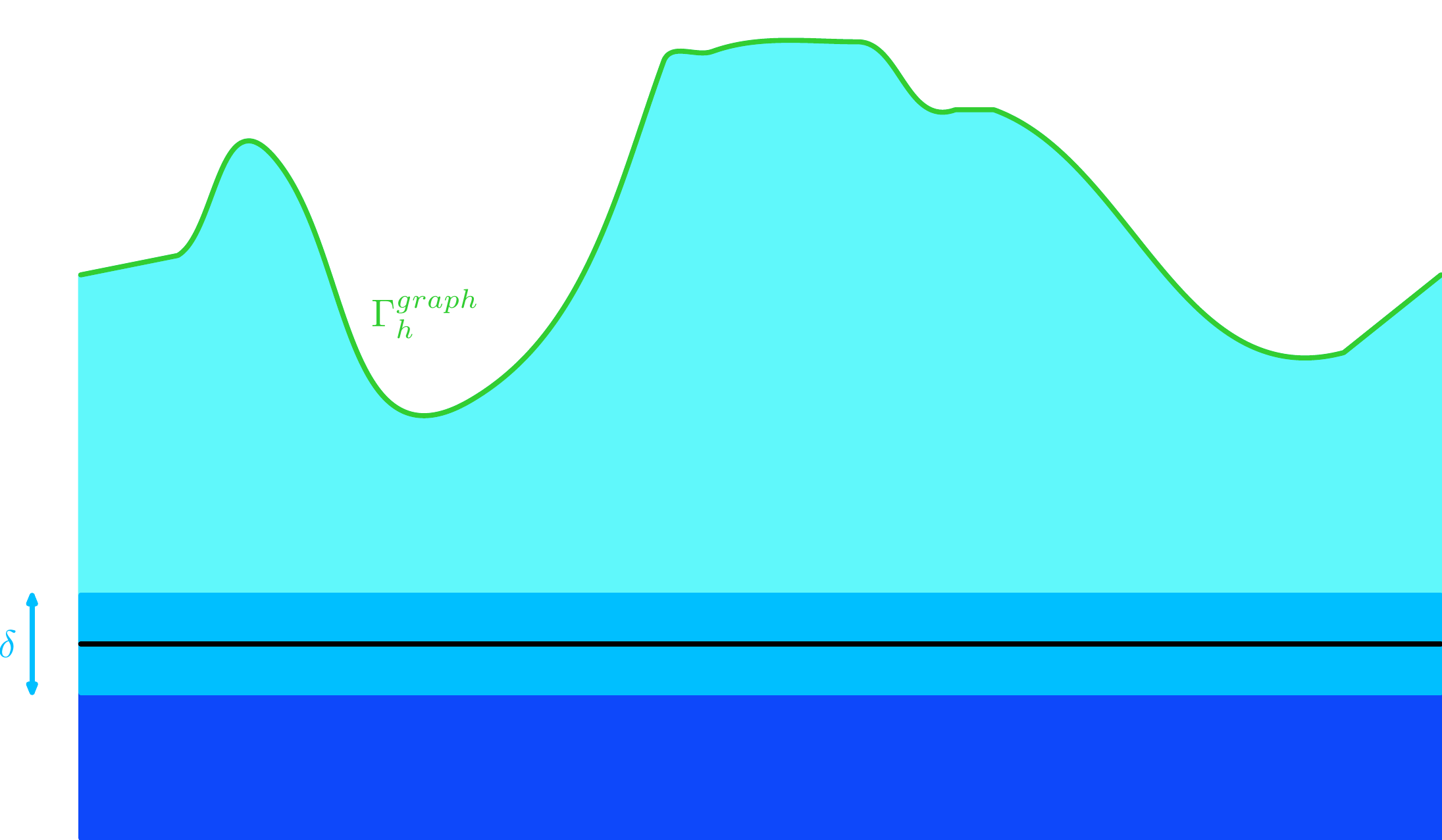}
\caption{In the transition-layer model the elastic tensors and the surface tension are regularized over a (thin) layer with thickness $\delta>0$.}
\label{filmdelta}
\end{center}
\end{figure}

In order to introduce the energy functional $\mathcal{F}_{\delta}$ corresponding to the \emph{transition-layer model} with transition layer of width $\delta > 0$, we consider an auxiliary smooth and increasing function $f$ such that $f(0)=0$, $\lim_{y\to +\infty}f(y)=1$, $\lim_{y\to -\infty}f(y)=-1$, and
\be{hypof}\int_{-\infty}^0(1+ f(y))^2\,dy<+\infty.\ee
%We observe that in \cite{FFLM2,S2} the function $f$ is assumed to satisfy  $\lim_{y\to -\infty}f(y)=-1$ instead of the hypothesis \eqref{hypof}. However, the technical assumption \eqref{hypof} seems to be needed also in \cite{FFLM2} \GGG (Too strong?) \EEE . 
We notice that the hypotheses on $f$ are satisfied for example by the \emph{boundary-layer function}
$$
r\mapsto\frac{2}{\pi}\arctan(r)
$$
proposed in \cite{KF,KF2} (see also \cite{S2}).  The regularized mismatch strain is defined as
$$E_{\delta}(y):=\frac12 e_0 \Big(1+f\Big(\frac{y}{\delta}\Big)\Big){\bf e_1}\odot{\bf e_1}\quad\text{for every }y\in\R,$$
whereas the regularized surface energy density takes the form
$$\varphi_{\delta}(y):=\gamma_f f\left(\frac{y}{\delta}\right)+ (\gamma_s-\gamma_{fs})\left(1- f\left(\frac{y}{\delta}\right)\right),$$ 
for every $y\in \R$ (see \cite{S}).

The transition-layer energy functional is then  given by
\begin{align*}
\mathcal{F}_{\delta}(u,h):=\int_{\Omega_h}W_{\delta}(y, Eu(x,y)-&E_{\delta}(y))\,dx\,dy\nonumber\\
&\qquad+\int_{\Gamma_h}\varphi_{\delta}(y)\,d\HH^1+\gamma_{fs}(b-a)
\end{align*}
for every $(u,h)\in X_{\rm Lip}$, where
$W_{\delta}(y,E):=\frac12 E:\C_{\delta}(y)E$ for every $y\in\R$ and $E\in\mtwo_{\rm sym}$, with 
\begin{align*}
\C_{\delta}(y):=\frac12\Big(1+f\Big(\frac{y}{\delta}\Big)\Big)\C_f&+\frac12\Big(1-f\Big(\frac{y}{\delta}\Big)\Big)\C_s\nonumber\\
&\qquad+\frac12\Big(1+f\Big(\frac{y}{\delta}\Big)\Big)\Big(1-f\Big(\frac{y}{\delta}\Big)\Big)(\C_s-\C_f).
\end{align*}
%$$\C_{\delta}(y):=\frac12\Big(1+f\Big(\frac{y}{\delta}\Big)\Big)\C_f+\frac12\Big(1-f\Big(\frac{y}{\delta}\Big)\Big)\C_s+\frac12\Big(1+f\Big(\frac{y}{\delta}\Big)\Big)\Big(1-f\Big(\frac{y}{\delta}\Big)\Big)(\C_s-\C_f).$$
Notice that $\C_{\delta}(0)=\C_{s}$, and that $\C_{\delta}(y)$ is symmetric and positive-definite for every $y\in \R$. Additionally, there exists a positive constant $C$ such that
\be{eq:quadratic-form}
\C_{\delta}(y)F:F\leq C|F|^2\quad\text{for every }F\in \M^{2\times 2}.
\ee

\subsection{Statement of the main result} The main result of the paper concerns the derivation of the energy functional $\mathcal{F}$ from the transition-layer functional $\mathcal{F}_{\delta}$ and the sharp-interface model $\mathcal{F}_0$. 

%The first main result of the paper reads as follows.

\begin{theorem}[Model derivation]
\label{thm:justification}
The energy $\mathcal{F}$ is both 
\begin{enumerate}
\item[1.] The relaxed functional of $\mathcal{F}_0$, i.e., 
\bas&\mathcal{F}(u,h):=\inf\left\{\liminf_{n\to +\infty} \mathcal{F}_{0}(u_n,h_n):\,(u_n,h_n)\in X_{\rm Lip},\right.\\
&\qquad\qquad\qquad\qquad\qquad\,(u_n,h_n)\to (u, h)\,\text{in }X,\,\text{and }|\Omega_{h_n}^+|=|\Omega_h^+| \bigg\}
\end{align*}
for every $(u,h)\in X$.
\item[2.] The $\Gamma$-limit as $\delta\to 0$ of the transition layer energies $\mathcal{F}_{\delta}$ under the volume constraint. 
\end{enumerate}
\end{theorem}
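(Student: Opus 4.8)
The plan is to establish, for each of the two assertions, a lower‑bound ($\liminf$) inequality together with a recovery sequence; throughout one may assume $\mathcal{F}(u,h)<+\infty$, since otherwise there is nothing to prove. It is convenient to record first that $\mathcal{F}_\delta\to\mathcal{F}_0$ pointwise on $X_{\rm Lip}$ as $\delta\to0$: for the surface part this follows from the elementary identity $\gamma_s\HH^1(Z_h)+\gamma_{fs}\HH^1((a,b)\setminus Z_h)=(\gamma_s-\gamma_{fs})\HH^1(Z_h)+\gamma_{fs}(b-a)$ together with $\varphi_\delta(y)\to\gamma_f$ for every fixed $y>0$, and for the elastic part from dominated convergence. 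Moreover $\varphi_\delta(y)\ge\min\{\gamma_f,\gamma_s-\gamma_{fs}\}=\varphi(0)$ for all $y\ge0$, and $\varphi_\delta(y)\to\gamma_f$ locally uniformly on $(0,\infty)$. Consequently the surface estimates for (1) and (2) run in parallel, while only the elastic term in (2) needs the extra input $\C_\delta(y)\to\C(y)$, $E_\delta(y)\to E_0(y)$ for $y\ne0$, together with the bound \eqref{eq:quadratic-form}.

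\emph{Lower bound.} Let $(u_n,h_n)\to(u,h)$ in $X$ with $|\Omega_{h_n}^+|=|\Omega_h^+|$ (and $\delta_n\to0$ for (2)). For the elastic contribution, fix $\Omega'\subset\subset\Omega_h$: by the Hausdorff convergence of the complements one has $\Omega'\subset\Omega_{h_n}$ for $n$ large, and $Eu_n\wk Eu$ weakly in $L^2(\Omega';\mtwo_{\rm sym})$; for (2) also $G_n:=Eu_n-E_{\delta_n}\wk Eu-E_0=:G$ in $L^2(\Omega')$, since $E_{\delta_n}\to E_0$ in $L^2(\Omega')$ by dominated convergence. Weak lower semicontinuity of the convex functional $v\mapsto\int_{\Omega'}W_0(y,Ev-E_0)$ (and, for (2), the elementary inequality $\tfrac12 G_n:\C_{\delta_n}G_n\ge\tfrac12 G:\C_{\delta_n}G+G:\C_{\delta_n}(G_n-G)$ combined with $\C_{\delta_n}\to\C$ a.e.\ and boundedly) yields, after exhausting $\Omega_h$ by such $\Omega'$, that $\liminf_n$ of the elastic energies of $(u_n,h_n)$ is at least $\int_{\Omega_h}W_0(y,Eu-E_0)$. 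For the surface contribution, since each $h_n$ is Lipschitz,
\begin{align*}
\int_{\Gamma_{h_n}}\varphi_0\,d\HH^1+\gamma_{fs}\HH^1((a,b)\setminus Z_{h_n})
&=\gamma_f\,\HH^1(\Gamma_{h_n}\cap\{y>0\})\\
&\quad+\gamma_{fs}(b-a)+(\gamma_s-\gamma_{fs})\HH^1(Z_{h_n}),
\end{align*}
with the analogous estimate for (2) using $\varphi_{\delta_n}\ge\min\{\gamma_f,\gamma_s-\gamma_{fs}\}$ on $[0,\infty)$ and $\varphi_{\delta_n}\to\gamma_f$ locally uniformly on $(0,\infty)$. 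We then pass to the limit arguing as in \cite{FFLM2} and using the structural description of $h\in AP(a,b)$ (properties~1--5 above and the decomposition $\Gamma_h=\Gamma_h^{jump}\sqcup\Gamma_h^{cut}\sqcup\Gamma_h^{graph}$): the arclength of $\Gamma_{h_n}$ localized near $C(h)$ is asymptotically at least $2\gamma_f\sum_{x\in C(h)}(h^-(x)-h(x))=2\gamma_f\HH^1(\Gamma_h^{cut})$, because each vertical cut forces the Lipschitz profiles to descend and to climb back; each jump is crossed at least once, giving $\gamma_f\HH^1(\Gamma_h^{jump})$; on $\{h^->0\}$ lower semicontinuity of the graph‑length (area‑type) functional gives $\gamma_f\HH^1(\tilde\Gamma_h\cap\{y>0\})$; and on $\{h^-=0\}$, where $h=0$ by lower semicontinuity, the competition between the densities $\gamma_f\sqrt{1+(h_n')^2}\ge\gamma_f$ on $\{h_n>0\}$ and $\gamma_s-\gamma_{fs}$ on $\{h_n=0\}$ produces, after re‑adding $\gamma_{fs}$, the density $\min\{\gamma_f,\gamma_s-\gamma_{fs}\}+\gamma_{fs}$. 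Collecting these contributions and re‑adding $\gamma_{fs}(b-a)$ gives exactly $\mathcal{F}(u,h)$.

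\emph{Recovery sequence.} Given $(u,h)\in X$ with $\mathcal{F}(u,h)<+\infty$ (hence $Eu-E_0\in L^2(\Omega_h)$, and we may assume $u$ admits an $H^1$ extension to a neighbourhood of $\overline\Omega_h$ near $\Gamma_h$), the plan is to keep finitely many of the at most countably many jumps and cuts of $h$ — enough to carry all but an $o(1)$ fraction of $\Var h$ and of $\HH^1(\Gamma_h^{cut})$ — and, on intervals of vanishing width $\e_n$ around each of them, to replace the corresponding vertical segment by a steep Lipschitz profile (a near‑vertical ramp across a jump, a near‑vertical descent to height $\approx h(x)$ and back across a cut), so that the localized arclength converges to $\HH^1(\Gamma_h^{jump})$, resp.\ to $2\HH^1(\Gamma_h^{cut})$, with density $\gamma_f$. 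On the remaining region one mollifies $h$ so that the arclength converges to that of the corresponding portion of $\tilde\Gamma_h$; on $\{h^-=0\}$ one sets $h_n\equiv0$ if $\gamma_s-\gamma_{fs}\le\gamma_f$ and $h_n\equiv\eta_n$ with $0<\eta_n\to0$ (and $\eta_n\gg\delta_n$ for (2)) if $\gamma_s-\gamma_{fs}>\gamma_f$, realizing the density $\min\{\gamma_f,\gamma_s-\gamma_{fs}\}+\gamma_{fs}$ there. The resulting $|\Omega_{h_n}^+|$ differs from $|\Omega_h^+|$ by $o(1)$, which is corrected to equality by an $o(1)$ vertical adjustment of $h_n$ on $\{h>0\}$ — this is where the fixed positive volume is used. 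Setting $u_n:=u$ on $\Omega_{h_n}\cap\Omega_h$ and extending it to $\Omega_{h_n}\setminus\Omega_h$ (a set of measure $o(1)$) with $L^2$‑bounded gradient makes the elastic energy $\int_{\Omega_h}W_0(y,Eu-E_0)+o(1)$; for (2) one additionally splits that integral over $\{|y|>\e\}$, where $W_{\delta_n}(y,\cdot-E_{\delta_n})\to W_0(y,\cdot-E_0)$ uniformly, and over $\{|y|\le\e\}$, whose contribution is $O(\e)$ by absolute continuity of $\int|Eu|^2$, and lets $\e\to0$ along a diagonal. One then verifies $(u_n,h_n)\to(u,h)$ in $X$ and $\limsup_n\mathcal{F}_0(u_n,h_n)\le\mathcal{F}(u,h)$, resp.\ $\limsup_n\mathcal{F}_{\delta_n}(u_n,h_n)\le\mathcal{F}(u,h)$, which together with the lower bound yields the claimed equalities. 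The main difficulty will be the surface lower bound, namely producing in a single localization over $\Gamma_h^{graph}$, $\Gamma_h^{jump}$ and $\Gamma_h^{cut}$ both the factor $2$ on the cuts and the value $\min\{\gamma_f,\gamma_s-\gamma_{fs}\}$ on $\{y=0\}$, starting from Lipschitz graphs converging only in the weak sense of Definition~\ref{def:conv-X}; matching the wetting layer to the volume constraint in the recovery sequence is a further, more routine, technical point.
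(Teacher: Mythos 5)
Your outline follows the same route as the paper's Propositions~\ref{prop:relax} and~\ref{prop:gamma-conv}: the lower bound comes from lower semicontinuity (Reshetnyak's theorem for the surface density $\varphi$, weak lower semicontinuity of the convex elastic density on sets $\Omega'\subset\subset\Omega_h$, with the $\delta$-dependent tensors handled either by your convexity inequality or, as in the paper, by isolating $\{|y|>\ep\}$ where $\C_{\delta_n}\to\C$ uniformly); the recovery sequence is built on the Lipschitz approximation of \cite{FFLM2}, with a lift of the whole profile off the substrate to realize the density $\gamma_f$ in the dewetting regime $\gamma_f<\gamma_s-\gamma_{fs}$; and the $\Gamma$-limsup for item~2 is organized, as in your diagonal argument, through an intermediate relaxation at fixed $\delta$ (Proposition~\ref{prop:relax-delta}).

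The genuine gap is the treatment of the volume constraint on the recovery sequence, which you label ``routine'' and propose to handle by ``an $o(1)$ vertical adjustment of $h_n$ on $\{h>0\}$.'' That adjustment breaks Lipschitz continuity: adding $\ep_n>0$ to a nonnegative Lipschitz profile on the open set $\{h>0\}$ (or on $\{h_n>0\}$) creates a jump of size $\ep_n$ across the boundary of that set, so the competitor leaves $X_{\rm Lip}$ and is no longer admissible for $\mathcal{F}_0$ or $\mathcal{F}_\delta$. The fix in the paper is the three-region interpolation \eqref{hndefinition}: keep $0$ where $\tilde h_n=0$, add $\ep_n$ where $\tilde h_n\geq\lambda_n$, and scale by $1+\ep_n/\lambda_n$ on $\{0<\tilde h_n<\lambda_n\}$, with $\lambda_n:=(|\Omega_h^+|-|\Omega_{\tilde h_n}^+|)^r$, $r\in(0,1)$. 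This keeps $h_n$ Lipschitz and matches the volume exactly, but the induced surface-energy error is of order $\ep_n/\lambda_n$ (see \eqref{An}), so one must prove $\ep_n/\lambda_n\to0$. That is exactly the content of Lemma~\ref{Vitali_lemma}: a Vitali equiintegrability argument gives a uniform lower bound $\mu_n\geq\mu>0$ on the volume gained per unit of lift, from which $\ep_n=\mu_n^{-1}(|\Omega_h^+|-|\Omega_{\tilde h_n}^+|)\to0$ and $\ep_n/\lambda_n\leq\mu^{-1}(|\Omega_h^+|-|\Omega_{\tilde h_n}^+|)^{1-r}\to0$. You correctly sense that the fixed positive volume, i.e.\ $\|h\|_{L^1}>0$, is what makes this possible, but the step is not routine --- it is the main technical addition of the relaxation proof beyond \cite{FFLM2} --- and a flat vertical shift cannot replace it.
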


%\section{Relaxation and $\Gamma$-convergence}
\section{Derivation of the thin-film model}\label{sec:justification}
In this section we provide a rigorous justification of the model $\mathcal{F}$ defined in \eqref{filmenergyfinal} by proving Theorem \ref{thm:justification}.

%We are now in a position to prove Theorem \ref{thm:justification}.
 \begin{proof}[Proof of Theorem \ref{thm:justification}]
 Assertion 1. and 2. of Theorem \ref{thm:justification} follow, respectively, from Propositions \ref{prop:relax} and  \ref{prop:gamma-conv}, which are proven in the following two subsections.
 \end{proof}

\subsection{Relaxation from the sharp-interface model} 

In this subsection we characterize $\mathcal{F}$ as the lower-semicontinuous envelope of the energy $\mathcal{F}_0$ with respect to the convergence in $X$, restricted to pairs in $X_{\rm Lip}$. To this aim we begin with an auxiliary result that will be fundamental in the proof of Proposition \ref{prop:relax}.

\begin{lemma}\label{Vitali_lemma}
Let  $h_n\in L^1((a,b); [0,+\infty))$ \MMM be \EEE such that $h_n\to h$ in $L^1(a,b)$. For every sequence $\{\lambda_n\}$ converging to 0, there exist a constant $\mu>0$ (depending on the sequences $\{\lambda_n\}$, $\{h_n\}$, and \MMM on \EEE $h$) and an integer $N_{\mu}$ such that 
\begin{equation}\label{lowerbound}
|H_{\lambda_n}|\,+\,\frac{1}{\lambda_n}\int_{[a,b]\setminus H_{\lambda_n}}h_n(x_1)\,\mathrm{d}x_1>\mu
\end{equation}
for every $n\geq N_{\mu}$, where $H_{\lambda_n}:=\{x_1\in[a,b]\,:\, h_n(x_1)\geq\lambda_n\}$.
\end{lemma}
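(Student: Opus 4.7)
The natural approach is proof by contradiction. Assume the claim fails; then $\liminf_{n\to\infty} A_n=0$, where $A_n:=|H_{\lambda_n}|+\frac{1}{\lambda_n}\int_{[a,b]\setminus H_{\lambda_n}}h_n\,dx_1$. Passing to a subsequence (not relabeled), $A_n\to 0$, and since both summands are nonnegative one has $|H_{\lambda_n}|\to 0$ and $\int_{[a,b]\setminus H_{\lambda_n}}h_n\,dx_1=\lambda_n\cdot o(1)\to 0$. The plan is to use these two facts, together with the $L^1$-convergence $h_n\to h$, to deduce $\int_a^b h\,dx_1=0$. Since $\mu$ is allowed to depend on $h$, one may tacitly assume $h\not\equiv 0$ on $(a,b)$ (which is the scenario of interest, corresponding to a limiting film profile with positive prescribed volume); this will then yield the required contradiction.

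The heart of the argument is the elementary decomposition
$$\int_a^b h_n\,dx_1=\int_{H_{\lambda_n}}h_n\,dx_1+\int_{[a,b]\setminus H_{\lambda_n}}h_n\,dx_1,$$
whose second term already vanishes in the limit by the reduction above. For the first term, the triangle inequality gives
$$\int_{H_{\lambda_n}}h_n\,dx_1\leq\|h_n-h\|_{L^1(a,b)}+\int_{H_{\lambda_n}}h\,dx_1,$$
whose first summand on the right tends to zero by hypothesis.

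The sole substantive step---the \emph{Vitali} step that presumably motivates the name of the lemma---is then to show $\int_{H_{\lambda_n}}h\,dx_1\to 0$. Since $h\in L^1(a,b)$ and $|H_{\lambda_n}|\to 0$, this follows from absolute continuity of the Lebesgue integral (equivalently, from uniform integrability of the singleton family $\{h\}$ combined with $\chi_{H_{\lambda_n}}\to 0$ in measure). Combining the estimates then yields $\int_a^b h_n\,dx_1\to 0$, while $L^1$-convergence forces $\int_a^b h_n\,dx_1\to\int_a^b h\,dx_1$, hence $\int_a^b h\,dx_1=0$, contradicting $h\not\equiv 0$. I anticipate no further obstacle beyond this absolute-continuity invocation; the remaining manipulations are bookkeeping forced by the splitting of $[a,b]$ into $H_{\lambda_n}$ and its complement.
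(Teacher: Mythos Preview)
Your argument is correct and follows the same overall scheme as the paper's proof (contradiction, pass to a subsequence along which the quantity tends to zero, exploit $|H_{\lambda_n}|\to 0$ together with $L^1$-convergence), but the key estimate is obtained differently. The paper applies Vitali's theorem to the \emph{sequence} $\{h_n\}$: by uniform integrability one gets $\|h_n\|_{L^1(H_{\lambda_n})}\leq \|h\|_{L^1(a,b)}-\eta$ for large $n$, and then $\tfrac{1}{\lambda_n}\int_{[a,b]\setminus H_{\lambda_n}}h_n=\tfrac{1}{\lambda_n}(\|h_n\|_{L^1}-\|h_n\|_{L^1(H_{\lambda_n})})\to+\infty$, a contradiction. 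You instead reduce, via the triangle inequality, to absolute continuity of the integral of the \emph{single} limit function $h$, which is more elementary: it avoids invoking Vitali's theorem (uniform integrability of a convergent sequence) and uses only that $\int_{H_{\lambda_n}}h\to 0$ because $|H_{\lambda_n}|\to 0$. Both arguments tacitly assume $\|h\|_{L^1(a,b)}>0$ (the paper through the choice $\eta\in(0,\|h\|_{L^1(a,b)})$, you explicitly), which is harmless in the application since the lemma is only used when the limiting profile has positive prescribed volume.
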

\begin{proof}
By contradiction, up to passing to a \MMM (not relabeled) subsequence \EEE both for $\{\lambda_n\}$ and $\{h_n\}$ we have that 
\begin{equation}\label{lowerbound_contradicted}
|H_{\lambda_n}|\,+\,\frac{1}{\lambda_n}\int_{[a,b]\setminus H_{\lambda_n}}h_n(x_1)\,\mathrm{d}x_1\leq \mu_n
\end{equation}
for some \MMM sequence $\{\mu_n\}$ \EEE converging to zero.
\MMM Fix \EEE $\eta\in(0,\|h\|_{L^1(a,b)})$. By Vitali's Theorem there exists $\mu_{\eta}>0$ such that 
$$
\|h_n\|_{L^1(S)}\leq \|h\|_{L^1(a,b)}-\eta
$$
for every measurable set $S$ with $|S|\leq\mu_{\eta}$ and $n\in \mathbb{N}$.  From \eqref{lowerbound_contradicted} it follows that 
$|H_{\lambda_n}|\leq\mu_{\eta}$ for $n$ large enough, and hence we obtain that
\begin{equation}\label{bound}
\|h_n\|_{L^1(H_{\lambda_n})}\leq \|h\|_{L^1(a,b)}-\eta
\end{equation}
\MMM for $n$ large enough. \EEE 
However, by \eqref{lowerbound_contradicted} we also have that 
\begin{align}
\label{eq:contr-new}
0\leftarrow\mu_n\geq \frac{1}{\lambda_n}\int_{[a,b]\setminus H_{\lambda_n}}h_n(x_1)\,\mathrm{d}x_1
&=\frac{1}{\lambda_n}\left(\|h_n\|_{L^1(a,b)}- \|h_n\|_{L^1(H_{\lambda_n})}\right)\\
&\geq  \frac{1}{\lambda_n}\left(\|h_n\|_{L^1(a,b)}-\|h\|_{L^1(a,b)})+\eta\right)\notag
\end{align}
where we used \eqref{bound} in the last inequality. \MMM Since $\lambda_n\to0$ and $h_n\to h$ in $L^1(a,b)$, there holds \EEE
$$
 \frac{1}{\lambda_n}\left(\|h_n\|_{L^1(a,b)}-\|h\|_{L^1(a,b)}+\eta\right)\to+\infty.
$$
\MMM This contradicts \eqref{eq:contr-new} and concludes the proof of the lemma. \EEE
\end{proof}

We are now ready to prove the main result of this subsection.

\EEE

\begin{proposition}[Relaxation of the sharp-interface model]
\label{prop:relax}
\begin{align*}
\mathcal{F}(u,h)&=\inf\left\{\liminf_{n\to +\infty} \mathcal{F}_{0}(u_n,h_n):\,(u_n,h_n)\in X_{\rm Lip},\right.\\
&\qquad\qquad\qquad\qquad\qquad\,(u_n,h_n)\to (u,h)\,\text{in }X,\,\text{and }|\Omega_{h_n}^+|=|\Omega_h^+| \bigg\},
\end{align*}
for every $(u,h)\in X$.
\end{proposition}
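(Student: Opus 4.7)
The plan is to establish the two matching inequalities that characterize a relaxation: a liminf inequality along arbitrary admissible Lipschitz sequences, and a recovery sequence producing the value $\mathcal{F}(u,h)$. The elastic parts of the two functionals coincide on the common domain, so the main work is on the surface-energy contribution and on organizing the geometric approximation at jumps, cuts, and at the film/substrate interface $\{y=0\}$.

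\textbf{Lower bound.} Assume $(u_n,h_n)\in X_{\rm Lip}$ with $(u_n,h_n)\to(u,h)$ in $X$ and $|\Omega_{h_n}^+|=|\Omega_h^+|$. For the elastic term, the Hausdorff convergence of $\R^2\setminus\Omega_{h_n}$ to $\R^2\setminus\Omega_h$ together with the weak convergence $u_n\wk u$ in $H^1_{\rm loc}(\Omega_h;\R^2)$ yields the standard lower-semicontinuity
\begin{equation*}
\int_{\Omega_h}W_0(y,Eu-E_0)\,dx\,dy\,\leq\,\liminf_{n\to+\infty}\int_{\Omega_{h_n}}W_0(y,Eu_n-E_0)\,dx\,dy
\end{equation*}
via convexity of $E\mapsto W_0(y,E)$ and Fatou's lemma after localization on compact subsets exhausting $\Omega_h$. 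For the surface term, the key point is to prove
\begin{equation*}
\liminf_{n\to+\infty}\left[\int_{\Gamma_{h_n}}\varphi_0\,d\HH^1+\gamma_{fs}\HH^1((a,b)\setminus Z_{h_n})\right]\geq \int_{\tilde{\Gamma}_h}\varphi\,d\HH^1+\gamma_{fs}(b-a)+2\gamma_f\HH^1(\Gamma_h^{cut}).
\end{equation*}
I would split according to whether we are in the wetting regime $\gamma_f\leq\gamma_s-\gamma_{fs}$ or the dewetting regime $\gamma_f>\gamma_s-\gamma_{fs}$; in each case the left-hand integrand can be rewritten as $\varphi$ plus the constant $\gamma_{fs}$ on the $x$-axis (using $\gamma_s=\gamma_f+\gamma_{fs}$-bound in the wetting case, and $\gamma_s=(\gamma_s-\gamma_{fs})+\gamma_{fs}$ in the dewetting case). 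The remaining geometric claim is the Goffman--Serrin-type statement that length with Lipschitz graphs converging in Hausdorff metric is lower-semicontinuous, with \emph{doubling} on the cuts of the limit: every vertical cut of $h$ is approximated by narrow valleys in $h_n$, each side of which contributes once to $\HH^1(\Gamma_{h_n})$. I would deduce this either directly from the local structure (intersecting with small rectangles around each cut point) or by invoking the corresponding lemma from \cite{FFLM2}.

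\textbf{Upper bound (recovery sequence).} Given $(u,h)\in X$ with $\mathcal{F}(u,h)<+\infty$, I build a sequence $(u_n,h_n)\in X_{\rm Lip}$ with $|\Omega_{h_n}^+|=|\Omega_h^+|$ and $\mathcal{F}_0(u_n,h_n)\to\mathcal{F}(u,h)$. By a truncation/mollification argument one may first reduce to the case where $h$ is piecewise Lipschitz with finitely many jumps and cuts and finitely many zero intervals; the general case follows by a diagonal procedure. For each cut $(x_0,y)$ with $0\leq y<h^-(x_0)$ and each jump segment of $h$, replace a window of width $\eta_n\to0$ by a V-shaped Lipschitz profile whose two slanted sides collapse to the cut/jump; this produces exactly two copies of the cut in $\Gamma_{h_n}$ (contributing $2\gamma_f \HH^1(\Gamma_h^{cut})$ in the limit since these sides lie in $\{y>0\}$) and one copy for jumps (matching the contribution of $\tilde{\Gamma}_h^{jump}$). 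In the wetting regime, on every maximal interval where $h=0$ I add a thin \emph{wetting layer} of height $\lambda_n\to0$, so that the sharp surface density becomes $\gamma_f$ on top and $\gamma_{fs}$ on the interface, for a total of $\gamma_f+\gamma_{fs}=\min\{\gamma_f,\gamma_s-\gamma_{fs}\}+\gamma_{fs}$, matching the relaxed density; in the dewetting regime no wetting layer is needed and the sharp $\gamma_s$ matches $(\gamma_s-\gamma_{fs})+\gamma_{fs}$ directly. The displacement $u_n$ is extended to the new domain by any $H^1$-extension; inside the valleys and wetting layers the extension is supported on sets of vanishing measure, so the elastic energy passes to the limit by dominated convergence.

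\textbf{Volume correction and the main obstacle.} The construction above increases the film volume by an amount of order $\lambda_n$ (wetting layer) plus $O(\eta_n)$ (peaks/valleys), so a compensating adjustment is needed to enforce $|\Omega_{h_n}^+|=|\Omega_h^+|$ exactly. This is where the technical difficulty concentrates and where Lemma \ref{Vitali_lemma} is invoked: applied to the current approximation $h_n$, it guarantees the existence of $\mu>0$ and of a region contributing enough volume budget (either by having uniformly positive measure in $\{h_n\geq\lambda_n\}$ or by accumulating $L^1$-mass in $\{h_n<\lambda_n\}$) so that a subtraction of a thin horizontal slice of height comparable to $\lambda_n$ on a set of definite measure $\sim\mu$ cancels the added volume. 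Because the slice has vanishing height it contributes $o(1)$ to both the elastic and the surface energy, and thus does not affect the energy convergence. The main obstacle is to verify that all these adjustments can be performed simultaneously while keeping $h_n$ Lipschitz, guaranteeing the required convergence $(u_n,h_n)\to(u,h)$ in $X$, and controlling all surface terms up to $o(1)$; a careful diagonal argument in $n$ then delivers $\mathcal{F}_0(u_n,h_n)\to\mathcal{F}(u,h)$ and closes the proof.
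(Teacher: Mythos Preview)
Your overall strategy is correct and matches the paper's: the liminf inequality via convexity of the elastic part plus a Reshetnyak/Goffman--Serrin argument for the surface term (with the doubling on cuts), and the limsup via Lipschitz approximation together with a wetting layer in the regime $\gamma_f<\gamma_s-\gamma_{fs}$. The paper also subtracts $\gamma_{fs}(b-a)$ once and for all to work with $\tilde{\mathcal F}_0$, $\tilde{\mathcal F}$, which is exactly your rewriting of the integrand as $\varphi+\gamma_{fs}$ on $\{y=0\}$.

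Where your sketch diverges is in the organization of the recovery sequence and in the role of Lemma~\ref{Vitali_lemma}. The paper separates the upper bound into two independent steps. Step~1 takes the FFLM2 approximation, which is \emph{from below}: $0\leq\tilde h_n\leq h$. Hence the volume defect $|\Omega_h^+|-|\Omega_{\tilde h_n}^+|$ is nonnegative and must be \emph{added}, not subtracted. The correction \eqref{hndefinition} adds $\varepsilon_n$ on $\{\tilde h_n\geq\lambda_n\}$, rescales multiplicatively on $\{0<\tilde h_n<\lambda_n\}$, and---crucially---keeps $h_n=0$ wherever $\tilde h_n=0$, so the surface contribution on $\{y=0\}$ is untouched. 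Lemma~\ref{Vitali_lemma} is not used to locate a set on which one can remove a slice; it is used to bound the denominator $\mu_n$ in \eqref{epsilonn} away from zero, which forces both $\varepsilon_n\to 0$ and $\varepsilon_n/\lambda_n\to 0$. The second limit is what keeps the interpolated profile Lipschitz and drives the surface-energy error \eqref{surface_energy_difference}--\eqref{surface_energy_convergence3} to zero. Your reading (``subtract a thin horizontal slice on a set of measure $\sim\mu$'') would require $|H_{\lambda_n}|$ itself to be bounded below, which the lemma does not provide: the budget may sit entirely in the term $\lambda_n^{-1}\int_{[a,b]\setminus H_{\lambda_n}}h_n$, where no uniform slice can be removed without creating new zeros.

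Step~2 then handles the wetting layer starting from an \emph{already Lipschitz} profile, with a much simpler volume fix: lift by $\varepsilon_n$ everywhere and truncate at a level $t_n$. Combining both modifications in one shot as you propose is in principle possible, but then the sign of the volume correction is ambiguous (the V-valleys at cuts \emph{decrease} volume, the wetting layer increases it) and the zero-set preservation must be enforced simultaneously; your sketch does not address either point. The paper's two-step decomposition avoids these entanglements.
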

\begin{proof}
We preliminary observe that the thesis is equivalent to showing that
\begin{align}
\nonumber\bar{\mathcal{F}}(u,h)&:=\inf\left\{\liminf_{n\to +\infty} \tilde{\mathcal{F}}_{0}(u_n,h_n):\,(u_n,h_n)\in X_{\rm Lip},\right.\\
\label{eq:equiv-rel}&\qquad\qquad\quad\,(u_n,h_n)\to (u,h)\,\text{in }X,\,\text{and }|\Omega_{h_n}^+|=|\Omega_h^+| \bigg\}=\tilde{\mathcal{F}}(u,h)
\end{align}
for every $(u,h)\in X$, where
$$\tilde{\mathcal{F}}_{0}(u,h):=\int_{\Omega_h}W_0(y, Eu(x,y)-E_0(y))\,dx\,dy+\int_{\Gamma_h}\tilde{\varphi}_0(y)\,d\HH^1,$$
with
$$\tilde{\varphi}_0(y):=\begin{cases}\gamma_f&\text{if }y>0\\\gamma_s-\gamma_{fs}&\text{otherwise}\end{cases},$$
and
$$\tilde{\mathcal{F}}(u,h):=\mathcal{F}(u,h)-\gamma_{fs}(b-a).$$
The proof of the inequality
$$\bar{\mathcal{F}}(u,h)\geq \tilde{\mathcal{F}}(u,h)$$
for every $(u,h)\in X$ follows along the lines of \cite[Proof of Theorem 2.8, Step 1]{FFLM2}, by observing that
$$\liminf_{n\to+\infty}\int_{\Gamma_{h_n}}\MMM\tilde{\varphi_0}\EEE(y)\,d\HH^1\geq \liminf_{n\to+\infty}\int_{\Gamma_{h_n}}{\varphi}(y)\,d\HH^1,$$
and by applying the argument in \cite[(2.22)--(2.26)]{FFLM2} directly to the density $\varphi$, which is lower-semicontinuous and hence allows to use Reshetnyak's theorem (see \cite[Theorem 2.38]{AFP}).

Fix now $(u,h)\in X$. To prove that
$$\bar{\mathcal{F}}(u,h)\leq \tilde{\mathcal{F}}(u,h)$$
it is enough to construct a sequence $\{(u_n,h_n)\}\subset X_{\rm Lip}$ such that
\begin{align}
&(u_n,h_n)\to (u,h)\quad\text{in }X,\label{condition_1}\\
&|\Omega_{h_n}^+|=|\Omega_h^+|, \label{condition_2}
\end{align}
and
\be{limsup} 
\limsup_{n\to +\infty}\tilde{\mathcal{F}}_{0}(u_n,h_n)\leq \tilde{\mathcal{F}}(u,h).
\ee
We subdivide the argument into two steps.\\

\noindent\textbf{Step 1.} %Consider the functional
%$$\hat{\mathcal{F}}(v,g):=\int_{\Omega_g}W_0(y, Eu(x,y)-E_0(y))\,dx\,dy+\int_{\Gamma_g}\varphi(y)\,d\HH^1$$
%for every $(v,g)\in X$. 
In this step we prove that there exists a sequence $\{(u_n,h_n)\}\subset X_{\rm Lip}$ such that 
\begin{align}
%&0\leq \tilde{h}_n(x)\leq h(x)\quad\text{for every $x\in[a,b]$,}\\
%&\tilde{h}_n\to h\quad\text{pointwise in $[a,b]$,}\\
&(u,h_n)\to (u,h)\quad\text{in }X,\label{sequence_convergence}\\
&|\Omega_{h_n}^+|=|\Omega_h^+|, \label{volume_constraint}\\
&\textrm{and}\,\,\,\,\lim_{n\to +\infty}\tilde{\mathcal{F}}(u_n,h_n)=\tilde{\mathcal{F}}(u,h). \label{eq:approx-lip}
\end{align}

We begin by observing that the construction introduced in \cite[Proof of Theorem 2.8, Steps 3--5]{FFLM2} yields a sequence $\{(u,\tilde{h}_n)\}\subset X_{\rm Lip}$ such that 
\begin{align*}
&0\leq \tilde{h}_n(x)\leq h(x)\quad\text{for every $x\in[a,b]$,}\\
&\tilde{h}_n\to h\quad\text{pointwise in $[a,b]$,}\\
&(u,\tilde{h}_n)\to (u,h)\quad\text{in }X,\\
%&|\Omega_{h_n}^+|=|\Omega_h^+|,
\end{align*}
and
\be{surface_energy_convergence2}\lim_{n\to +\infty}\int_{\Gamma_{\tilde{h}_n}}\varphi(y)\,d\HH^1 = \int_{\tilde{\Gamma}_h}\varphi(y)\,d\HH^1\,+\,2\gamma_{\rm f} \HH^1(\Gamma_h^{cut}).\ee
%Note that $|\Omega_{\tilde{h}_n}^+|\leq|\Omega_h^+|$. %the functions $\tilde{h}_n$ do not satisfy the $L^1$-constraint

The remaining part of this step is devoted to \MMM modify \EEE the sequence $\tilde{h}_n$ in order to obtain a sequence $h_n$ not only satisfying \eqref{sequence_convergence} and \eqref{eq:approx-lip}, but also the volume constraint \eqref{volume_constraint}. With this aim, let us measure how much the volume associated to each $\tilde{h}_n$ differs from the one of $h$ by a parameter $\lambda_n$ defined \MMM as \EEE
\begin{equation}\label{lambdan} 
\lambda_n:=\left(|\Omega_h^+|- |\Omega^+_{\tilde{h}_n}| \right)^{r}\geq0
\end{equation}
for every $n\in\mathbb{N}$ and for a fixed number $r\in(0,1)$. \MMM For every $n\in \mathbb{N}$, let \EEE $h_n :(a,b) \to \mathbb{R}_+$ be the function \MMM given \EEE by
\begin{equation}\label{hndefinition} 
h_n(x):=
\begin{cases}
\tilde{h}_n(x) &\textrm{if $\tilde{h}_n(x)=0$},\\
\tilde{h}_n(x)+\varepsilon_n &\textrm{if $\tilde{h}_n(x)\geq\lambda_n$},\\
\left(1+\frac{\varepsilon_n}{\lambda_n}\right)\tilde{h}_n(x)&\textrm{if $\tilde{h}_n(x)\in(0,\lambda_n)$}
\end{cases}
\end{equation}
for every $x\in[a,b]$ and for 
\begin{equation}\label{epsilonn} 
\varepsilon_n:=\frac{1}{\mu_n}\left(|\Omega_h^+|- |\Omega^+_{\tilde{h}_n}|\right),
\end{equation}
where $\mu_n$ is given by
$$
\mu_n:=|\tilde{H}_{\lambda_n}|\,+\,\frac{1}{\lambda_n}\int_{[a,b]\setminus \tilde{H}_{\lambda_n}}\tilde{h}_n(x)\,\mathrm{d}x
$$
with $\tilde{H}_{\lambda_n}:=\{x\in[a,b]\,:\, \tilde{h}_n(x)\geq\lambda_n\}$. \MMM Note that, by contruction, $|\Omega^+_{h_n}|=|\Omega_h^+|$. \EEE Since $\lambda_n\to0$, by the $L^1$-convergence of $\{\tilde{h}_n\}$, we can apply Lemma \ref{Vitali_lemma} and obtain a \MMM constant \EEE $\mu>0$ and \MMM a corresponding \EEE integer $N_{\mu}$ such that 
$$
\mu_n>\mu
$$
%\begin{equation}\label{lowerboundtilde}
%|\tilde{H}_{\lambda_n}|\,+\,\frac{1}{\lambda_n}\int_{[0,L]\setminus \tilde{H}_{\lambda_n}}h_n(x_1)\,\mathrm{d}x_1>\mu
%\end{equation}
%where $\tilde{H}_{\lambda_n}:=\{x_1\in[0,L]\,:\, \tilde{h}_n\geq\lambda_n\}$ 
for every $n\geq N_{\mu}$. Then, from \eqref{epsilonn} \MMM we obtain \EEE
\begin{equation}\label{epsilonnzero}
0\leq\varepsilon_n\leq\frac{1}{\mu}\left(|\Omega_h^+|- |\Omega^+_{\tilde{h}_n}|)\right)\to0,
\end{equation}
and 
\begin{equation}\label{ratiozero}
0\leq\frac{\varepsilon_n}{\lambda_n}\leq\frac{1}{\mu}\left(|\Omega_h^+|- |\Omega^+_{\tilde{h}_n}|\right)^{1-r}\to0
\end{equation}
since $r\in(0,1)$. Note that \eqref{sequence_convergence} together with \eqref{epsilonnzero}  and the fact that $\tilde{h}_n \leq h_n \leq \tilde{h}_n +\varepsilon_n$   implies that
\begin{equation}\label{first_assert_half}
\Rz^2 \setminus \Omega_{h_n} \to \Rz^2 \setminus \Omega_h
\end{equation}
 with respect to the Hausdorff-distance. Furthermore, by also employing Bolzano's Theorem \MMM we deduce \EEE
$$
 |h_n(x)-h_n(x')|\leq C_n\left(1+\frac{\varepsilon_n}{\lambda_n}\right)|x-x'| 
$$
for every $x, x'\in[a,b]$, where $C_n>0$ denotes the Lipschitz constant associated to $\tilde{h}_n$. Hence, \MMM the maps \EEE $h_n$ are also Lipschitz. We now prove that % rest of the proof is devoted to prove that
\begin{equation}\label{surface_energy_convergence} 
\int_{\Gamma_{h_n}}\varphi(y)\,d\HH^1 \to \int_{\tilde{\Gamma}_h}\varphi(y)\,d\HH^1\,+\,2\gamma_{\rm f} \HH^1(\Gamma_h^{cut}).% \int_{\Gamma_h}\varphi(y)\,d\HH^1.
\end{equation}
By  the definition of $h_n$ we have that 
\begin{align}
\Big |\int_{\Gamma_{\tilde{h}_n}}{\varphi}(y)\,d\HH^1 &- \int_{\Gamma_{h_n}}{\varphi}(y)\,d\HH^1\Big |\nonumber\\
&= \gamma_f\left|\mathcal{H}^1(\Gamma_{\tilde{h}_n}\cap \{\lambda_n+\varepsilon_n>y>0\})-\mathcal{H}^1(\Gamma_{h_n}\cap \{\lambda_n>y>0\})\right|\nonumber\\
&= \gamma_f\sum_{i\in\mathcal{I}^n}\int_{a^n_i}^{b^n_i} \left|\sqrt{1+(\tilde{h}'_n)^2}-\sqrt{1+(h'_n)^2}\right| \,\mathrm{d}x
%\nonumber\\
%& \leq \gamma_f\sum_{i\in\mathcal{I}^n}\left(\int_{a^n_i}^{b^n_i}   A^n   \,\mathrm{d}x_1 + \int_{a^n_i}^{b^n_i}   B^n   \,\mathrm{d}x\right) 
\label{surface_energy_difference}
\end{align}
for some index set $\mathcal{I}^n$, and \MMM for points \EEE $a_i^n<b_i^n$ with $(a_i^n,b_i^n) \cap (a_j^n,b_j^n)= \emptyset$ for all $i,j \in \mathcal{I}^n, i \neq j$  such that
$$
\bigcup_{i\in\mathcal{I}^n}(a_i^n,b_i^n)= \{x\in[a,b]\,:\, 0<\tilde{h}_n(x)<\lambda_n\}.
$$
We now observe that \MMM on each interval $(a_i^n,b_i^n)$ there holds \EEE
\begin{align}
\left| \sqrt{1+(\tilde{h}'_n)^2}-\sqrt{1+(h'_n)^2}\right|&\leq  \frac{\sigma_n(\tilde{h}'_n)^2}{\sqrt{1+(\tilde{h}'_n)^2}+\sqrt{1+(h'_n)^2}}\nonumber\\
&\leq\frac{\sigma_n(\tilde{h}'_n)^2}{2\sqrt{1+({h}'_n)^2}}\nonumber\\
&\leq\frac{\sigma_n}{2}\sqrt{1+({h}'_n)^2}
 \label{An}
\end{align}
where  the parameter 
$$\sigma_n:=\left[\left(\frac{\varepsilon_n}{\lambda_n}\right)^2+2\frac{\varepsilon_n}{\lambda_n}\right]$$
is such that 
\be{sigmanzero} \sigma_n\to 0
\ee
by \eqref{ratiozero}.
Therefore, by combining \eqref{surface_energy_difference} with \eqref{An}  we obtain
%\begin{equation}\label{AlmostClaim}
\begin{align}
\Big |\int_{\Gamma_{\tilde{h}_n}}{\varphi}(y)\,d\HH^1 &- \int_{\Gamma_{h_n}}{\varphi}(y)\,d\HH^1\Big|\nonumber\\
& \leq \frac{\gamma_f\sigma_n}{2}  \sum_{i\in\mathcal{I}^n}\int_{a^n_i}^{b^n_i}   \sqrt{1+({h}'_n)^2}   \,\mathrm{d}x_1\nonumber\\
& \leq  \frac{\gamma_f\sigma_n}{2}   \mathcal{H}^1(\Gamma_{{h}_n})\to 0 \label{surface_energy_convergence3}
\end{align}
%\end{equation}
where we used \eqref{sigmanzero}  and the fact that  by \eqref{surface_energy_convergence2} there exists a constant $C>0$ for which 
\be{graphsbounded}
\mathcal{H}^1(\Gamma_n)\leq C
\ee
for every $n\in\N$. From \eqref{surface_energy_convergence2} and \eqref{surface_energy_convergence3}  \MMM we deduce \EEE \eqref{surface_energy_convergence}.  

Let us now define $u_n: \Omega_{h_n}\to\mathbb{R}^2$ by
\begin{equation}\label{undefinition} 
u_n(x,y):=
\begin{cases}
u(x,y-\varepsilon_n) &\textrm{if $y>y^0+\varepsilon_n$},\\
u(x,y^0) &\textrm{if $y^0+\varepsilon_n\geq y>y^0$},\\
u(x,y) &\textrm{if $y^0\geq y$,}
\end{cases}
\end{equation}
\MMM where $y^0<-\ep_n$ is \EEE chosen in such a way that $u(\cdot,y^0)\in H^1((a,b);\mathbb{R}^2)$. Note that \MMM the maps \EEE $u_n$ are well defined in $\Omega_{h_n}$ since $h_n\leq \tilde{h}_n+\varepsilon_n\leq h+\varepsilon_n$. Furthermore, by  \eqref{epsilonnzero} and \eqref{undefinition} we have that $u_n\rightharpoonup u$ in $H^1_{\rm loc}(\Omega';\mathbb{R}^2)$ for every $\Omega'\subset\subset\Omega_h$ as $n\to+\infty$, which together with \eqref{first_assert_half} and \eqref{graphsbounded} yields \eqref{sequence_convergence}. 

In the remaining part of this step  we prove \MMM that \EEE
\begin{equation}\label{elastic_energy_convergence} 
\int_{\Omega_{h_n}}W_0(y, Eu_n(x,y)-E_0(y))\,dx\,dy\to \int_{\Omega_h}W_0(y, Eu(x,y)-E_0(y))\,dx\,dy
\end{equation}
which together with \eqref{surface_energy_convergence} implies \eqref{eq:approx-lip}. We begin by observing that 
 \begin{align}\label{elastic_energy_convergence_novolume}
\int_{\Omega_{\tilde{h}_n}}W_0(y, Eu(x,y)-E_0(y))\,dx\,dy\to \int_{\Omega_h}W_0(y, Eu(x,y)-E_0(y))\,dx\,dy
\end{align}
by the Monotone Convergence Theorem. Furthermore, by \eqref{undefinition} \MMM there holds \EEE
 \begin{align}
 \Big|\int_{\Omega_{h_n}}W_0(y, &Eu_n(x,y)-E_0(y))\,dx\,dy- \int_{\Omega_{\tilde{h}_n}}W_0(y, Eu(x,y)-E_0(y))\,dx\,dy\Big| \nonumber\\
  \leq &\,C\Big[\,\int_{[a,b]\times[y^0,y^0+\varepsilon_n]} (W_0(y,Eu(x,y^0))\MMM+W_0(y,Eu(x,y)))\EEE\,dx\,dy + \int_{\{\tilde{h}_n>0\}\times[0,\varepsilon_n]}  |E_0(y)|^2   \,dx\,dy\nonumber\\
  &
 %   \nonumber\\
% & 
\quad\, + \int_{\{\tilde{h}_n=0\}\times[-\varepsilon_n,0]} W_0(y,Eu(x,y))\,dx\,dy \,+ \, \int_{E_n} W_0(y,Eu(x,y)-E_0(y))\,dx\,dy  \,\Big]\nonumber\\
   \leq \,&\, C\Big[\,\varepsilon_n\int_{[a,b]} |Eu(x,y^0)|^2\,dx%\nonumber\\
  % & 
  +  \int_{[a,b]\times[0,\varepsilon_n]}  |E_0(y)|^2   \,dx\,dy  \nonumber\\
  & \quad\,+  \int_{[a,b]\times([-\varepsilon_n,0]\MMM\cup [y^0,y^0+\ep_n])\EEE}  |Eu|^2 \,dx\,dy  %\nonumber\\
  %& 
  +  \int_{E_n} (|Eu|^2+|E_0(y)|^2) \,dx\,dy   \,\Big] \label{elastic_energy_convergence2} 
 \end{align}
 where $\{\tilde{h}_n=0\}:=\{x\in[a,b]\,:\, \tilde{h}_n(x)=0\}$, $\{\tilde{h}_n>0\}:=[a,b]\setminus\{\tilde{h}_n=0\}$, $\{\lambda_n>\tilde{h}_n>0\}:=\{x\in[a,b]\,:\, \lambda_n>\tilde{h}_n(x)>0\}$, and 
 $$
 E_n:=\left(\Omega_{\tilde{h}_n}\setminus(\Omega_{h_n}-\varepsilon_n\textbf{e}_2)\right)\cap\left(\{\lambda_n>\tilde{h}_n>0\}\times(0,+\infty)\right).$$
Notice that $|E_n|\leq C\varepsilon_n$ for some constant $C>0$, since  $0<\tilde{h}_n-(h_n-\varepsilon_n)\leq \varepsilon_n$ for every $x\in\{\lambda_n>\tilde{h}_n>0\}$ by \eqref{hndefinition}. Therefore,  from  \eqref{epsilonnzero} and \eqref{elastic_energy_convergence2} \MMM we conclude \EEE that 
$$
 \Big|\int_{\Omega_{h_n}}W_0(y, Eu_n(x,y)-E_0(y))\,dx\,dy- \int_{\Omega_{\tilde{h}_n}}W_0(y, Eu(x,y)-E_0(y))\,dx\,dy\Big| \to0
$$
as $n\to+\infty$, and hence, also in view of \eqref{elastic_energy_convergence_novolume}, we obtain \eqref{elastic_energy_convergence}.

%thus \eqref{eq:approx-lip} implies the thesis.\\

\noindent\textbf{Step 2.}  In the case in which $\gamma_s-\gamma_{fs}\leq\gamma_f$, there holds 
$$\tilde{\mathcal{F}}(u_n,h_n)=\tilde{\mathcal{F}}_0(u_n,h_n)$$
and hence, \eqref{limsup} directly  follows  from \eqref{eq:approx-lip}. Therefore, the sequence constructed in Step 1 realizes \eqref{condition_1}, \eqref{condition_2}, and \eqref{limsup} for the case $\gamma_s-\gamma_{fs}\leq\gamma_f$. It remains to treat the case $\gamma_f<\gamma_s-\gamma_{fs}$. 
 In view of the previous step, \EEE and by a diagonal argument, the thesis reduces to show that for every $(\bar{u},\bar{h})\in X_{\rm Lip}$ there exists a sequence $\{(\bar{u}_n,\bar{h}_n)\}\subset X_{\rm Lip}$ such that 
\begin{align*}
&(\bar{u}_n,\bar{h}_n)\to (\bar{u},\bar{h})\quad\text{in }X,\\
&|\Omega_{\bar{h}_n}^+|=|\Omega_h^+|,
\end{align*}
and
\be{eq:interm-conv}\lim_{n\to +\infty}\tilde{\mathcal{F}}_0(\bar{u}_n,\bar{h}_n)= \tilde{\mathcal{F}}(\bar{u},\bar{h})  \ee

Fix $(\bar{u},\bar{h})\in X_{\rm Lip}$. %We argue as in \cite[Proof of Theorem 2.9, Step 2]{FFLM2}, and we define the maps
 We define $\bar{h}_n$ by 
$$\bar{h}_n(x):=\min\{\bar{h}(x)+\ep_n,t_n\}$$
for every $x\in [a,b]$, where $\{\ep_n\}$ is a vanishing sequence of positive numbers, and $\{t_n\}$ is chosen so that $t_n>0$ and $|\Omega_{\bar{h}_n}^+|=|\Omega_{\bar{h}}^+|$ for every $n\in \N$.
Choosing $y_0<0$ such that $u(\cdot,y_0)\in H^1((a,b);\R^2)$ (the existence of $y_0$ follows by a slicing argument), we set,
$$\bar{u}_n(x,y):=\begin{cases}\bar{u}(x,y-\ep_n)&\text{if }y>y_0+\ep_n,\\
\bar{u}(x,y_0)&\text{if }y_0-\ep_n\leq y\leq y_0+\ep_n,\\
\bar{u}(x,y)&\text{if }y<y_0-\ep_n,\end{cases}$$
for every $(x,y)\in \Omega_{\bar{h}_n}$.
By definition,
$$\lim_{n\to +\infty}\int_{\Omega_{\bar{h}_n}}W_0(y, E\bar{u}_n(x,y)-E_0(y))\,dx\,dy= \int_{\Omega_{\bar{h}}}W_0(y, Eu(x,y)-E_0(y))\,dx\,dy,$$
and
 \be{eq:ptw}\varphi_0(\min\{y+\ep_n,t_n\})=\gamma_f=\varphi(y)\ee 
for every $y\geq 0$. Property \eqref{eq:interm-conv} follows then by the observation that
$$\limsup_{n\to +\infty}\int_{\Gamma_{\bar{h}_n}}\varphi_0(y)\,d\HH^1\leq \lim_{n\to +\infty} \int_{\Gamma_h}\varphi_0(\min\{y+\ep_n,t_n\})\,d\HH^1=\int_{\Gamma_h}\varphi(y)\,d\HH^1,$$
where in the last equality we used \eqref{eq:ptw}. %, and of the Dominated Convergence Theorem.
\end{proof}

\subsection{$\Gamma$-convergence from the transition-layer model}

In this subsection we characterize $\mathcal{F}$  defined in \eqref{filmenergyfinal} as the  $\Gamma$-limit of the transition-layer functionals  $\mathcal{F}_{\delta}$. The proof of this result is a modification of the arguments in \cite[Theorems 2.8 and 2.9]{FFLM2} to the situation with possibly $\C_f\not=\C_s$ and $\gamma_{fs}\neq0$, therefore we here highlight only the main changes for convenience of the reader.

We begin by characterizing the lower-semicontinuous envelope of $\mathcal{F}_{\delta}$ with respect to the convergence in $X$,  restricted to pairs in $X_{\rm Lip}$, with the integral formula \eqref{eq:barFdeltaformula}.

\begin{proposition}[Relaxation of the transition-layer functionals]
\label{prop:relax-delta}
For every $\delta>0$, let $\bar{\mathcal{F}}_{\delta}$ be the relaxed functional of $\mathcal{F}_{\delta}$, namely
\begin{align}
\label{eq:barFdelta}
\bar{\mathcal{F}}_{\delta}(u,h):=\inf&\left\{\liminf_{n\to +\infty} \mathcal{F}_{\delta}(u_n,h_n):\,(u_n,h_n)\in X_{\rm Lip},\right.\nonumber\\
&\qquad\qquad\,(u_n,h_n)\to (u,h)\,\text{in }X,\,\text{and }|\Omega_{h_n}^+|=|\Omega_h^+| \bigg\}
\end{align}
for every $(u,h)\in X$.
Then
\begin{align}
\label{eq:barFdeltaformula}
\bar{\mathcal{F}}_{\delta}(u,h)&=\int_{\Omega_h}W_{\delta}(y, Eu(x,y)-E_0(y))\,dx\,dy+\int_{\tilde{\Gamma}_h}\varphi_{\delta}(y)\,d\HH^1\nonumber\\
&\qquad\qquad\qquad+2\sum_{x\in S}\int_{h(x)}^{h^-(x)}\varphi_{\delta}(y)\,dy+\gamma_{fs}(b-a),
\end{align}
for every $(u,h)\in X$.
\end{proposition}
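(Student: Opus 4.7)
The plan is to prove \eqref{eq:barFdeltaformula} via the usual two-sided argument (liminf and limsup), following the template of \cite[Theorems 2.8 and 2.9]{FFLM2} and of Proposition \ref{prop:relax} above, indicating only the modifications needed when $\C_f\neq \C_s$ and $\gamma_{fs}\neq 0$.

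For the \emph{lower bound}, I would fix $(u,h)\in X$ and an admissible sequence $\{(u_n,h_n)\}\subset X_{\rm Lip}$ with $(u_n,h_n)\to(u,h)$ in $X$ and $|\Omega_{h_n}^+|=|\Omega_h^+|$. The constant $\gamma_{fs}(b-a)$ passes to the limit trivially. The bulk term is controlled by Ioffe-type lower semicontinuity: since $W_\delta(y,\cdot)$ is convex in the strain and the coefficient $\C_\delta(y)$ does not depend on $n$, the weak convergence $u_n\rightharpoonup u$ in $H^1_{\rm loc}$ together with the Hausdorff convergence of complements yields
$$\liminf_{n\to+\infty}\int_{\Omega_{h_n}}W_\delta(y,Eu_n-E_\delta)\,dx\,dy\geq \int_{\Omega_h}W_\delta(y,Eu-E_\delta)\,dx\,dy$$
after exhausting $\Omega_h$ by open subsets $\Omega'\subset\subset\Omega_h$; the argument is identical to \cite[Proof of Theorem 2.8, Step 1]{FFLM2}, the possibly distinct elasticity tensors $\C_f,\C_s$ entering only through the fixed $y$-dependent coefficient. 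For the surface term, since $\varphi_\delta$ is continuous and nonnegative on $\R$, Reshetnyak's lower semicontinuity theorem applied to $\varphi_\delta\,d\HH^1\lfloor\Gamma_{h_n}$ yields the required splitting into a contribution on $\tilde\Gamma_h$ plus the doubled cut contribution $2\sum_x \int_{h(x)}^{h^-(x)}\varphi_\delta(y)\,dy$, reproducing verbatim the slicing argument of \cite[(2.22)--(2.26)]{FFLM2}.

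For the \emph{upper bound}, I would construct a recovery sequence by the two-step procedure of Step 1 in Proposition \ref{prop:relax}. First, using the approximation of \cite[Proof of Theorem 2.8, Steps 3--5]{FFLM2}, I produce Lipschitz profiles $\tilde h_n\to h$ whose subgraphs open every cut into a thin strip of height $h^-(x)-h(x)$, so that
$$\lim_{n\to+\infty}\int_{\Gamma_{\tilde h_n}}\varphi_\delta(y)\,d\HH^1=\int_{\tilde\Gamma_h}\varphi_\delta(y)\,d\HH^1+2\sum_{x\in C(h)}\int_{h(x)}^{h^-(x)}\varphi_\delta(y)\,dy,$$
the factor $2$ reflecting the two lateral sides of each strip; second, I enforce the volume constraint through the scaling \eqref{hndefinition}--\eqref{epsilonn}, where Lemma \ref{Vitali_lemma} ensures that both $\e_n$ and $\e_n/\lambda_n$ vanish. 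The accompanying vertical shift \eqref{undefinition} of the displacement $u$ then produces convergence of the bulk energy exactly as in \eqref{elastic_energy_convergence}, with $W_0$ replaced by $W_\delta$.

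The principal technical obstacle, compared to Proposition \ref{prop:relax}, is that $\varphi_\delta$ now depends nontrivially on $y$, so the estimate \eqref{surface_energy_convergence3} controlling the effect of the volume correction on the surface integral must accommodate the modulus of continuity $\omega_\delta$ of $\varphi_\delta$. Since $\|h_n-\tilde h_n\|_\infty\leq\e_n\to 0$ and $\HH^1(\Gamma_{h_n})$ is uniformly bounded by \eqref{graphsbounded}, the extra error is at most of order $\omega_\delta(\e_n)\HH^1(\Gamma_{h_n})\to 0$, so the convergence of surface integrals is preserved. All remaining steps mirror those already carried out in the proof of Proposition \ref{prop:relax}, so the two inequalities together give \eqref{eq:barFdeltaformula}.
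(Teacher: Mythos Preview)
Your proposal is correct and follows the same overall liminf/limsup template as the paper. The one genuine difference lies in how you restore the volume constraint in the limsup construction: you import the three-piece scaling \eqref{hndefinition}--\eqref{epsilonn} together with Lemma~\ref{Vitali_lemma}, which in Proposition~\ref{prop:relax} was needed precisely because the sharp-interface density $\tilde\varphi_0$ jumps at $y=0$ and one must keep the zero set of the profile fixed. For $\mathcal{F}_\delta$ the density $\varphi_\delta$ is smooth, so the paper bypasses this machinery entirely: it simply takes $\tilde h_n(x):=h_n(x)+\ep_n$ with $\ep_n=(b-a)^{-1}\big(|\Omega_h^+|-\int_a^b h_n\,dx\big)$, together with the vertically shifted displacement, and observes directly that the surface integrals converge and that the bulk error \eqref{eq:add-goes-to-zero} vanishes by dominated convergence. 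Your route works as well---the extra error you identify, of order $\omega_\delta(\ep_n)\,\HH^1(\Gamma_{h_n})$, does go to zero---but the uniform shift is shorter and avoids invoking Lemma~\ref{Vitali_lemma} altogether.
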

\begin{proof} Denote by $\hat{\mathcal{F}}_{\delta}$ the right-hand side of \eqref{eq:barFdelta}. The proof of the inequality
$$\bar{\mathcal{F}}_{\delta}(u,h)\geq \hat{\mathcal{F}}_{\delta}(u,h)$$
for every $(u,h)\in X$ is analogous to \cite[Proof of Theorem 2.8, Step 1]{FFLM2}. To prove the opposite inequality, we argue as in \cite[Proof of Theorem 2.8, Steps 3--5]{FFLM2}, and we construct a sequence $\{h_n\}$ of Lipschitz maps such that
\begin{align}
\label{eq:conditions-hn}
&0\leq h_n(x)\leq h(x)\quad\text{for every }x\in [a,b],\\
&\nonumber(u,h_n)\to (u,h)\quad\text{in }X,\\
&\nonumber\lim_{n\to +\infty}\mathcal{F}_{\delta}(u,h_n)=\hat{\mathcal{F}}_{\delta}(u,h).
\end{align}
With a slicing argument we identify $y_0<0$ such that $u(\cdot,y_0)\in H^1((a,b);\R^2)$, and we define the maps
$$u_n(x,y):=\begin{cases}
u(x,y-\ep_n)&\text{if }y>y_0+\ep_n,\\
u(x,y_0)&\text{if }y_0<y\leq y_0+\ep_n,\\
u(x,y)&\text{if }y\leq y_0
\end{cases}$$
for a.e. $(x,y)\in \Omega_{\tilde{h}_n}$, where $\tilde{h}_n(x):=h_n(x)+\ep_n$ for every $x\in [a,b]$, and
$$\ep_n:=\frac{1}{b-a}\Big(|\Omega_h^+|-\int_a^b h_n(x)\,dx\Big).$$
It is immediate to see that $|\Omega_{\tilde{h}_n}^+|=|\Omega_h^+|$, and that $(u_n,\tilde{h}_n)\to (u,h)$ in $X$. Additionally,
\be{eq:serven1}
\lim_{n\to +\infty}\int_{\Gamma_{h_n}}\varphi_{\delta}(y)\,d\HH^1=\lim_{n\to +\infty}\int_{\Gamma_{\tilde{h}_n}}\varphi_{\delta}(y)\,d\HH^1.
\ee
Regarding the bulk energies, we have
\begin{align*}
&\int_{\Omega_{\tilde{h}_n}}W_{\delta}(y,Eu_n(x,y)-E_{\delta}(y))\,dx\,dy=\int_a^b\int_{-\infty}^{y_0}W_{\delta}(y,Eu(x,y)-E_{\delta}(y))\,dy\,dx\\
&\qquad\qquad\qquad\qquad\qquad\qquad+\int_a^b\int_{y_0}^{y_0+\ep_n}W_{\delta}(y,Eu(x,y_0)-E_{\delta}(y))\,dy\,dx\\
&\qquad\qquad\qquad\qquad\qquad\qquad+\int_a^b\int_{y_0+\ep_n}^{h_n(x)+\ep_n}W_{\delta}(y,Eu(x,y-\ep_n)-E_{\delta}(y))\,dy\,dx.
\end{align*}
Thus, by \eqref{eq:quadratic-form} there holds
\begin{align}
\label{eq:add-goes-to-zero}
&\Big|\int_{\Omega_{\tilde{h}_n}}W_{\delta}(y,Eu_n(x,y)-E_{\delta}(y))\,dx\,dy-\int_{\Omega_{{h}_n}}W_{\delta}(y,Eu(x,y)-E_{\delta}(y))\,dx\,dy\Big|\\
&\nonumber\quad\leq C\int_a^b\int_{y_0}^{y_0+\ep_n}|Eu(x,y)-E_{\delta}(y)|^2\,dy\,dx\\
&\nonumber\quad+\int_a^b\int_{y_0}^{h_n(x)}|W_{\delta}(y+\ep_n,Eu(x,y)-E_{\delta}(y+\ep_n))-W_{\delta}(y,Eu(x,y)-E_{\delta}(y))|\,dx\,dy\\
&\nonumber\quad\leq C\int_a^b\int_{y_0}^{y_0+\ep_n}|Eu(x,y)-E_{\delta}(y)|^2\,dy\,dx+C\int_a^b \int_{y_0}^{h_n(x)}|E_{\delta}(y+\ep_n)-E_{\delta}(y)|^2\,dy\,dx\\
&\nonumber\quad+C\int_a^b \int_{y_0}^{h_n(x)}(\C_{\delta}(y+\ep_n)-\C_{\delta}(y))(Eu(x,y)-E_{\delta}(y)):(Eu(x,y)-E_{\delta}(y))\,dy\,dx,
\end{align}
which converges to zero due to the Dominated Convergence Theorem. By combining \eqref{eq:conditions-hn}, \eqref{eq:serven1}, and \eqref{eq:add-goes-to-zero} we deduce that
$$\lim_{n\to+\infty}\mathcal{F}_{\delta}(u_n,\tilde{h}_n)=\lim_{n\to+\infty}\mathcal{F}_{\delta}(u,h_n)=\hat{\mathcal{F}}_{\delta}(u,h),$$
which in turn yields
$$\bar{\mathcal{F}}_{\delta}(u,h)\leq \hat{\mathcal{F}}_{\delta}(u,h)$$
and completes the proof of the proposition.
\end{proof}
Proposition \ref{prop:relax-delta} is instrumental for the proof of the $\Gamma$-convergence result.
\begin{proposition}[$\Gamma$-convergence]
\label{prop:gamma-conv}
The functional $\mathcal{F}$ is the $\Gamma$-limit as $\delta\to 0$ of $\{F_{\delta}\}_{\delta}$ under volume constraint. Namely, if $(u_{\delta},h_{\delta})\to (u,h)$ in $X$, and $|\Omega_{h_{\delta}}^+|=|\Omega_h^+|$ for every $\delta$, then
$$\mathcal{F}(u,h)\leq \liminf_{\delta\to 0}\mathcal{F}_{\delta}(u_{\delta},h_{\delta}).$$
Additionally, for every $(u,h)\in X$, there exists a sequence $\{(u_{\delta},h_{\delta})\}\subset X$ such that $|\Omega_{h_{\delta}}^+|=|\Omega_h^+|$ for every $\delta$, and
$$\mathcal{F}(u,h)\geq \limsup_{\delta\to 0}\mathcal{F}_{\delta}(u_{\delta},h_{\delta}).$$
\end{proposition}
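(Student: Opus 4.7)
The plan is to establish the liminf and limsup inequalities separately, leveraging the relaxation Proposition~\ref{prop:relax} and adapting the arguments of \cite[Theorems 2.8 and 2.9]{FFLM2} to the present setting, where $\C_f$ may differ from $\C_s$ and $\gamma_{fs}\neq 0$.

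For the liminf, I let $(u_\delta,h_\delta)\to (u,h)$ in $X$ with $|\Omega_{h_\delta}^+|=|\Omega_h^+|$ and, without loss of generality, $(u_\delta,h_\delta)\in X_{\rm Lip}$ (else $\mathcal{F}_\delta=+\infty$). The bulk term is handled by the standard convexity trick: for every $\Omega'\subset\subset \Omega_h$, eventually $\Omega'\subset \Omega_{h_\delta}$, and the pointwise inequality
\[
W_\delta(y,Eu_\delta-E_\delta)\ge W_\delta(y,Eu-E_0)+\C_\delta(y)(Eu-E_0):(Eu_\delta-E_\delta-Eu+E_0)
\]
integrated on $\Omega'$ yields, via dominated convergence ($\C_\delta(y)\to \C(y)$ a.e.\ with the uniform bound \eqref{eq:quadratic-form}) together with $Eu_\delta-E_\delta\wk Eu-E_0$ weakly and $\C_\delta(\cdot)(Eu-E_0)\to \C(\cdot)(Eu-E_0)$ strongly in $L^2(\Omega')$, the lower bound $\int_{\Omega'}W_0(y,Eu-E_0)\,dx\,dy$; then $\Omega'\uparrow \Omega_h$ closes the bulk estimate. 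For the surface term I exploit that $\varphi_\delta(y)\to \gamma_f$ uniformly on $\{y\ge \eta\}$ for every $\eta>0$ and that the $\delta$-independent lower bound $\varphi_\delta(y)\ge \min\{\gamma_f,\gamma_s-\gamma_{fs}\}=\varphi(0)$ holds for all $y\ge 0$. Hence, for $\eta>0$ small and then $\delta$ small enough, $\varphi_\delta$ dominates on $\{y\ge 0\}$ the lower semicontinuous density
\[
\varphi^\eta_\ast(y):=\begin{cases}\gamma_f-\eta&\text{if }y\ge \eta,\\ \varphi(0)&\text{if }0\le y<\eta,\end{cases}
\]
and a Reshetnyak-type lower semicontinuity argument for Hausdorff-converging graphs, identical to \cite[(2.22)--(2.26)]{FFLM2}, gives
\[
\liminf_{\delta\to 0}\int_{\Gamma_{h_\delta}}\varphi_\delta\,d\HH^1\ge \int_{\tilde{\Gamma}_h}\varphi^\eta_\ast\,d\HH^1+2\int_{\Gamma_h^{cut}}\varphi^\eta_\ast\,d\HH^1,
\]
the factor $2$ reflecting that Lipschitz profiles approach each cut from both sides. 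Sending $\eta\to 0$ and using $\HH^1(\Gamma_h^{cut}\cap\{y=0\})=0$ recovers the surface part of $\mathcal{F}(u,h)$, and combined with the bulk bound and the constant $\gamma_{fs}(b-a)$ this yields the liminf.

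For the limsup, by Proposition~\ref{prop:relax} I produce $(u_n,h_n)\in X_{\rm Lip}$ with $(u_n,h_n)\to (u,h)$ in $X$, $|\Omega_{h_n}^+|=|\Omega_h^+|$, and $\mathcal{F}_0(u_n,h_n)\to \mathcal{F}(u,h)$. For each fixed $n$, dominated convergence gives $\int_{\Omega_{h_n}}W_\delta(y,Eu_n-E_\delta)\,dx\,dy\to \int_{\Omega_{h_n}}W_0(y,Eu_n-E_0)\,dx\,dy$ as $\delta\to 0$, while $\varphi_\delta(y)\to \gamma_f$ for $y>0$ together with $\varphi_\delta(0)=\gamma_s-\gamma_{fs}$ yields
\[
\int_{\Gamma_{h_n}}\varphi_\delta\,d\HH^1\to \gamma_f\HH^1(\Gamma_{h_n}\cap\{y>0\})+(\gamma_s-\gamma_{fs})\HH^1(Z_{h_n}).
\]
The constant terms in the two energies differ by $\gamma_{fs}(b-a)-\gamma_{fs}\HH^1((a,b)\setminus Z_{h_n})=\gamma_{fs}\HH^1(Z_{h_n})$, which exactly compensates the $-\gamma_{fs}\HH^1(Z_{h_n})$ shift produced by the discrepancy $\varphi_\delta(0)-\gamma_s=-\gamma_{fs}$ at $y=0$. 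Hence $\lim_{\delta\to 0}\mathcal{F}_\delta(u_n,h_n)=\mathcal{F}_0(u_n,h_n)$, and a diagonal extraction $\delta_n\to 0$ yields $\mathcal{F}_{\delta_n}(u_n,h_n)\to \mathcal{F}(u,h)$, completing the recovery sequence.

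The main obstacle is the surface liminf in the dewetting regime $\gamma_f>\gamma_s-\gamma_{fs}$, where $\varphi$ has a downward jump at $y=0$ and the plain pointwise limit of $\varphi_\delta$ on $\{y\ge 0\}$ would overshoot the target: the uniform bound $\varphi_\delta\ge \varphi(0)$ on $\{y\ge 0\}$ and the lower semicontinuous approximations $\varphi^\eta_\ast$ are precisely the devices needed to bypass this issue.
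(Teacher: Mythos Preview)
Your argument is correct, and in fact somewhat more streamlined than the route taken in the paper. The two proofs differ mainly in the limsup construction. The paper invokes Proposition~\ref{prop:relax-delta} (the relaxation formula for $\bar{\mathcal{F}}_\delta$) to reduce to building a recovery sequence in $X$ for the relaxed functionals, and then treats the wetting regime $\gamma_f\geq\gamma_s-\gamma_{fs}$ (constant sequence) and the non-wetting regime $\gamma_f<\gamma_s-\gamma_{fs}$ (a vertical shift by a carefully chosen $\ep_n$) separately. You instead invoke Proposition~\ref{prop:relax} to pass to Lipschitz pairs $(u_n,h_n)$ with $\mathcal{F}_0(u_n,h_n)\to\mathcal{F}(u,h)$, observe the pointwise-in-$n$ convergence $\mathcal{F}_\delta(u_n,h_n)\to\mathcal{F}_0(u_n,h_n)$ (the bookkeeping with the $\gamma_{fs}$ terms at $y=0$ is exactly right), and diagonalize; this neatly avoids the case split and does not require Proposition~\ref{prop:relax-delta} at all. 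For the bulk liminf the difference is more cosmetic: the paper excises a thin strip $\{|y|\leq\ep\}$ so as to work where $\C_{\delta_n}\to\C$ uniformly, whereas you use the convexity inequality for $W_\delta$ together with the strong $L^2$ convergence $\C_\delta(\cdot)(Eu-E_0)\to\C(\cdot)(Eu-E_0)$ on each $\Omega'\subset\subset\Omega_h$ (which holds by dominated convergence since $\C_\delta$ is uniformly bounded and converges a.e.). For the surface liminf your $\varphi^\eta_\ast$ device is precisely the spirit of \cite[Proof of Theorem~2.9, Step~1]{FFLM2}, to which the paper simply refers. The upshot is that your approach is a legitimate and slightly more economical alternative, trading the auxiliary relaxation of $\mathcal{F}_\delta$ for the already-available relaxation of $\mathcal{F}_0$.
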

\begin{proof}
We subdivide the proof into two steps.\\

\noindent\textbf{Step 1.}
We first show that for all sequences $\{\delta_n\}$, and $\{(u_n,h_n)\}\subset X_{\rm Lip}$, with $\delta_n\to 0$, $(u_n,h_n)\to (u,h)$ in $X$, and such that $|\Omega_{h_n}^+|=|\Omega_h^+|$ for every $n\in \N$, there holds
\be{eq:add-limsup}
\lim_{n\to +\infty}\mathcal{F}_{\delta_n}(u_n,h_n)\geq\mathcal{F}(u,h).
\ee
 The liminf inequality for the surface energies follows arguing as in \cite[Proof of Theorem 2.9, Step 1]{FFLM2}. To study the elastic energies fix $D\subset\subset\Omega_h$ and let $\eta>0$. Let $\ep>0$ be small enough so that
\be{eq:smallenough}
 \int_{D\cap\{|y|\leq \ep\}} W_{0}(y, Eu(x,y)-E_{0}(y))\,dx\,dy\leq \eta.
 \ee
 We have
 \begin{align*}
 &\liminf_{n\to +\infty}\int_{\Omega_{h_n}}W_{\delta_n}(y, Eu_n(x,y)-E_{\delta_n}(y))\,dx\,dy\\
 &\quad\geq \liminf_{n\to +\infty}\int_D W_{\delta_n}(y, Eu_n(x,y)-E_{\delta_n}(y))\,dx\,dy\\
 &\quad\geq \liminf_{n\to +\infty}\int_{D\cap\{|y|>\ep\}} W_{\delta_n}(y, Eu_n(x,y)-E_{\delta_n}(y))\,dx\,dy\\
 &\qquad+\liminf_{n\to +\infty}\int_{D\cap\{|y|\leq \ep\}} W_{\delta_n}(y, Eu_n(x,y)-E_{\delta_n}(y))\,dx\,dy.
 \end{align*}
 Now,
 \begin{align}
 \label{eq:serven2}
 &\int_{D\cap\{|y|>\ep\}} W_{\delta_n}(y, Eu_n(x,y)-E_{\delta_n}(y))\,dx\,dy\\
 &\quad\nonumber=\int_{D\cap\{|y|>\ep\}}(\C_{\delta_n}(y)-\C(y))(Eu_n(x,y)-E_{\delta_n}(y)):(Eu_n(x,y)-E_{\delta_n}(y))\,dx\,dy\\
 &\qquad\nonumber +\int_{D\cap\{|y|>\ep\}} W_{0}(y, Eu_n(x,y)-E_{\delta_n}(y))\,dx\,dy.
 \end{align}
 Since $(u_n,h_n)\to (u,h)$ in $X$, by Definition \ref{def:conv-X} the right-hand side of \eqref{eq:serven2} satisfies
 $$\liminf_{n\to +\infty}\int_{D\cap\{|y|>\ep\}} W_{0}(y, Eu_n(x,y)-E_{\delta_n}(y))\,dx\,dy\geq \int_{D\cap\{|y|>\ep\}} W_{0}(y, Eu(x,y)-E_{0}(y))\,dx\,dy,$$
 whereas the first term in the right-hand side of \eqref{eq:serven2} can be estimated as
 \begin{align*}
 &\Big|\int_{D\cap\{|y|>\ep\}}(\C_{\delta_n}(y)-\C(y))(Eu_n(x,y)-E_{\delta_n}(y)):(Eu_n(x,y)-E_{\delta_n}(y))\,dx\,dy\Big|\\
 &\quad\leq C\Big\|1-f\Big(\frac{y}{\delta_n}\Big)\Big\|_{L^{\infty}(D\cap\{y>\ep\})}+C\Big\|1+f\Big(\frac{y}{\delta_n}\Big)\Big\|_{L^{\infty}(D\cap\{y<-\ep\})},
 \end{align*}
 which converges to zero as $n\to +\infty$ due to the properties of $f$. Hence, by \eqref{eq:smallenough},
 \begin{align*}
 &\liminf_{n\to +\infty}\int_{\Omega_{h_n}}W_{\delta_n}(y, Eu_n(x,y)-E_{\delta_n}(y))\,dx\,dy\\
&\quad\geq\liminf_{n\to +\infty} \int_{D\cap\{|y|>\ep\}} W_{\delta_n}(y, Eu_n(x,y)-E_{\delta_n}(y))\,dx\,dy\\
&\quad\geq \int_{D\cap\{|y|>\ep\}} W_{0}(y, Eu(x,y)-E_{0}(y))\,dx\,dy\\
&\quad\geq \int_{D} W_{0}(y, Eu(x,y)-E_{0}(y))\,dx\,dy-\eta.
 \end{align*}
 By the arbitrariness of $\eta$ and $D$ we conclude that
  \begin{align*}
 &\liminf_{n\to +\infty}\int_{\Omega_{h_n}}W_{\delta_n}(y, Eu_n(x,y)-E_{\delta_n}(y))\,dx\,dy\\
 &\quad\geq \int_{\Omega_h} W_{0}(y, Eu(x,y)-E_{0}(y))\,dx\,dy.
 \end{align*}
\noindent\textbf{Step 2.} By Proposition \ref{prop:relax-delta} to prove the limsup inequality it is enough to show that for all sequences $\{\delta_n\}$ of nonnegative numbers, with $\delta_n\to 0$, and for every $(u,h)\in X$ there exists $\{(u_n,h_n)\}\subset X$ such that $(u_n,h_n)\to (u,h)$ in $X$, and
\be{eq:capire}\limsup_{n\to +\infty}\overline{\mathcal{F}}_{\delta_n}(u_n,h_n)\leq \mathcal{F}(u,h).\ee
Fix $\{\delta_n\}$. If $\gamma_f\geq \gamma_s-\gamma_{fs}$, take $u_n=u$ and $h_n=h$. Then \eqref{eq:capire} follows by the pointwise convergences 
$$\varphi_{\delta_n}(y)\to \varphi_0(y)\quad\text{for every }y\in [0,+\infty),$$
and
\be{eq:cdelta-point}\C_{\delta_n}(y)\to \C(y)\quad\text{for every }y\in \R.\ee
If $\gamma_f<\gamma_s-\gamma_{fs}$, construct $\ep_n\to 0$ such that
$$\varphi_{\delta_n}(y+\ep_n)\to\varphi_0(y)=\gamma_f\quad\text{for all }y\in [0,+\infty).$$
Let $y_0<0$ be such that $u(\cdot,y_0)\in H^1((a,b);\R^2)$. We define
$$u_n(x):=\begin{cases}
u(x,y-\ep_n)&\text{if }y>y_0+\ep_n,\\
u(x,y_0)&\text{if }y_0<y\leq y_0+\ep_n,\\
u(x,y)&\text{if }y\leq y_0,
\end{cases}$$
and $h_n(x):=\min\{h(x)+\ep_n,t_n\}$, where $t_n>0$ is such that $|\Omega_{h_n}^+|=d$. The convergence of surface energies follows as in \cite[Proof of theorem 2.9, Step 2]{FFLM2}. Regarding the bulk energies, we have 
\begin{align}
\label{eq:add-bulk}
&\int_{\Omega_{h_n}}W_{\delta_n}(y,Eu_n(x,y)-E_{\delta_n}(y))\,dx\,dy=\int_a^b\int_{-\infty}^{y_0}W_{\delta_n}(y,Eu(x,y)-E_{\delta_n}(y))\,dx\,dy\\
&\nonumber\quad+\int_a^b\int_{y_0}^{y_0+\ep_n}W_{\delta_n}(y,Eu(x,y_0)-E_{\delta_n}(y))\,dx\,dy\\
&\nonumber\quad+\int_a^b\int_{y_0+\ep_n}^{h_n(x)}W_{\delta_n}(y,Eu(x,y-\ep_n)-E_{\delta_n}(y))\,dx\,dy.
\end{align}
The first term in the right-hand side of \eqref{eq:add-bulk} satisfies
\begin{align}
\label{eq:bulk1}
\lim_{n\to +\infty}&\int_a^b\int_{-\infty}^{y_0}W_{\delta_n}(y,Eu(x,y)-E_{\delta_n}(y))\,dx\,dy\\
&\nonumber\quad=\int_a^b\int_{-\infty}^{y_0}W_{0}(y,Eu(x,y)-E_{0}(y))\,dx\,dy,
\end{align}
owing to \eqref{eq:cdelta-point} and the fact that 
\be{eq:edeltan}E_{\delta_n}\to E_0\quad\text{strongly in }L^2_{\rm loc}(\R;\M^{2\times 2}_{\rm sym}).\ee
By \eqref{eq:quadratic-form} the second term in the right-hand side of \eqref{eq:add-bulk} can be bounded from above as
\begin{align}
\label{eq:bulk2}
&\int_a^b\int_{y_0}^{y_0+\ep_n}W_{\delta_n}(y,Eu(x,y_0)-E_{\delta_n}(y))\,dx\,dy\\
&\nonumber\quad\leq C\int_a^b\int_{y_0}^{y_0+\ep_n}|Eu(x,y_0)-E_{\delta_n}(y)|^2\,dy\,dx
\end{align}
and hence vanishes, as $n\to +\infty$. Finally, there holds
\begin{align*}
&\int_a^b\int_{y_0+\ep_n}^{h_n(x)}W_{\delta_n}(y,Eu(x,y-\ep_n)-E_{\delta_n}(y))\,dx\,dy\\
&\quad\leq \int_{\Omega_h}W_{\delta_n}(y+\ep_n,Eu(x,y)-E_{\delta_n}(y+\ep_n))\,dx\,dy\\
&\quad=\int_{\Omega_h}(\C_{\delta_n}(y{+}\ep_n){-}\C(y))(Eu(x,y){-}E_{\delta_n}(y{+}\ep_n)){:}(Eu(x,y){-}E_{\delta_n}(y{+}\ep_n))\,dx\,dy\\
&\quad+\int_{\Omega_h}W_{0}(y,Eu(x,y)-E_{\delta_n}(y+\ep_n))\,dx\,dy.
\end{align*}
By the Dominated Convergence Theorem, \eqref{eq:cdelta-point}, and \eqref{eq:edeltan}, we conclude that
\begin{align}
\label{eq:bulk3}
&\limsup_{n\to +\infty}\int_a^b\int_{y_0+\ep_n}^{h_n(x)}W_{\delta_n}(y,Eu(x,y-\ep_n)-E_{\delta_n}(y))\,dx\,dy\\
&\nonumber\quad\leq\int_{\Omega_h}W_{0}(y,Eu(x,y)-E_{0}(y))\,dx\,dy.
\end{align}
Inequalities \eqref{eq:bulk1}--\eqref{eq:bulk3} imply the convergence of the elastic energies and complete the proof of \eqref{eq:capire}.
\end{proof}

\section{Properties of local minimizers}\label{sec:regularity}

In this section we present a first regularity result for $\mu$-local minimizers $(u,h)$ of \eqref{filmenergyfinal}. Employing an argument first introduced in \cite{CL}, we prove that optimal profiles $h$ satisfy the \emph{internal-ball condition}. 

\MMM In what follows, denote by $\beta$ the quantity
\be{eq:beta}
\beta:=\frac{\min\{\gamma_f,\gamma_s-\gamma_{fs}\}}{\gamma_f}.
\ee \EEE

%The first part of our analysis is very close to the arguments in \cite[Section 3]{FFLM2}, therefore we only state the main results without proof.  
In order to prove the internal-ball condition we need to perform local variations. Therefore, we first show that  the area constraint in the  minimization problem of Definition \ref{def:local-min} can be replaced with a suitable penalization in the energy functional. 

\begin{proposition}\label{prop:penalization}
Let $(u, h)\in X$ be a $\mu$-local minimizer for the functional $\mathcal{F}$. Then there exists $\lambda_0>0$ such that
\be{eq:penprob}
\mathcal{F}(u, h)=\min\left\{\mathcal{F}(v, g)+\lambda||\Omega_h^+|-|\Omega_g^+||:\,(v,g)\in X,\,|\Omega_g\Delta \Omega_h|\leq \frac{\mu}{2}\right\}
\ee
for all $\lambda\geq \lambda_0$.
\end{proposition}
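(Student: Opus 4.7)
My plan is to argue by contradiction. If the conclusion failed, there would exist a sequence $\lambda_n\to+\infty$ and competitors $(v_n,g_n)\in X$ with $|\Omega_{g_n}\Delta\Omega_h|\leq \mu/2$ and
$$\mathcal{F}(v_n,g_n)+\lambda_n\big||\Omega_h^+|-|\Omega_{g_n}^+|\big|<\mathcal{F}(u,h).$$
Since $\mathcal{F}(v_n,g_n)\geq 0$ and $\mathcal{F}(u,h)<+\infty$, this forces $d_n:=\big||\Omega_h^+|-|\Omega_{g_n}^+|\big|\to 0$. The heart of the argument is then to construct an associated sequence $(\tilde v_n,\tilde g_n)\in X$ which satisfies the exact volume constraint $|\Omega_{\tilde g_n}^+|=|\Omega_h^+|$, keeps $|\Omega_{\tilde g_n}\Delta\Omega_h|\leq \mu$, and obeys the linear bound
$$\mathcal{F}(\tilde v_n,\tilde g_n)\leq \mathcal{F}(v_n,g_n)+Cd_n$$
for a constant $C$ independent of $n$. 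Granted this, the $\mu$-local minimality of $(u,h)$ would yield $\mathcal{F}(u,h)\leq\mathcal{F}(\tilde v_n,\tilde g_n)\leq\mathcal{F}(v_n,g_n)+Cd_n<\mathcal{F}(u,h)+(C-\lambda_n)d_n$, a contradiction as soon as $\lambda_n>C$.

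To construct the modification I would mimic the volume-adjustment procedure of Step 1 in the proof of Proposition \ref{prop:relax}. First, using Proposition \ref{prop:relax} itself, I would approximate $(v_n,g_n)$ by a recovery sequence $(v_n^k,g_n^k)\in X_{\rm Lip}$ with $|\Omega_{g_n^k}^+|=|\Omega_{g_n}^+|$, $(v_n^k,g_n^k)\to(v_n,g_n)$ in $X$, and $\mathcal{F}_0(v_n^k,g_n^k)\to\mathcal{F}(v_n,g_n)$. On the Lipschitz profile $g_n^k$ the volume can be adjusted either upwards via the construction \eqref{hndefinition} (when $|\Omega_{g_n}^+|<|\Omega_h^+|$) or downwards via a shift-plus-positive-part truncation $\max(g_n^k-\epsilon_n,0)$ (when $|\Omega_{g_n}^+|>|\Omega_h^+|$). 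In either case Lemma \ref{Vitali_lemma} supplies the uniform lower bound on the averaging factor $\mu_n$ that forces the shift parameter to satisfy $\epsilon_n\leq Cd_n$. Correspondingly the displacement $v_n^k$ is vertically shifted as in \eqref{undefinition}, using a trace-admissible slice $y_0<0$. A diagonal extraction in $k$ then produces the candidate pair $(\tilde v_n,\tilde g_n)\in X$.

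The bulk of the technical work consists in verifying the linear energy bound. The surface contribution is controlled as in \eqref{An}--\eqref{surface_energy_convergence3}, yielding an error of order $\epsilon_n\cdot\mathcal{H}^1(\Gamma_{g_n^k})$; the latter quantity remains bounded along the recovery sequence, so the error is $O(d_n)$. The elastic contribution is estimated as in \eqref{eq:add-goes-to-zero}--\eqref{elastic_energy_convergence2}, again of order $\epsilon_n=O(d_n)$. The constraint $|\Omega_{\tilde g_n}\Delta\Omega_h|\leq\mu$ follows from $|\Omega_{\tilde g_n}\Delta\Omega_{g_n}|\to 0$ coupled with $|\Omega_{g_n}\Delta\Omega_h|\leq\mu/2$ via the triangle inequality for symmetric differences. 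The main obstacle I anticipate lies in the dewetting regime $\gamma_f<\gamma_s-\gamma_{fs}$, where the density $\varphi$ presents a sharp jump across $\{y=0\}$ and the extra cut contribution $2\gamma_f\mathcal{H}^1(\Gamma_h^{cut})$ must be tracked carefully: when the downward-truncation procedure erases thin slivers near the substrate, one has to ensure that no spurious surface-energy jumps are created, so that the increment remains linear in $d_n$ rather than picking up an $O(1)$ term.
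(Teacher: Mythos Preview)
Your strategy---contradiction plus a volume-adjusted competitor with energy increment linear in the volume defect $d_n$---is a reasonable alternative to the paper's route. The paper instead first proves that each penalized problem $(M_\lambda)$ admits a minimizer $(u_\lambda,h_\lambda)$ (using the uniform length bound $\mathcal{H}^1(\Gamma_{h_k})\leq M(C)$ for compactness, which requires a case distinction on whether $\beta=0$), and then invokes \cite[Lemma~3.2]{FFLM2} to show that $|\Omega_{h_\lambda}^+|=|\Omega_h^+|$ for $\lambda$ large. Both routes ultimately hinge on the same technical ingredient: a volume-modification with energy increment $O(d_n)$, but the paper outsources this to the established lemma in \cite{FFLM2}, applied to the \emph{penalized minimizers} rather than to arbitrary competitors.

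The gap in your argument is that the estimates you cite do not deliver the linear bound $\mathcal{F}(\tilde v_n,\tilde g_n)\leq \mathcal{F}(v_n,g_n)+Cd_n$. The surface error in \eqref{An}--\eqref{surface_energy_convergence3} is of order $\sigma_n\sim \epsilon_n/\lambda_n$, where $\lambda_n$ here is the threshold from \eqref{lambdan}, chosen as $d_n^{r}$ with $r\in(0,1)$; hence the error is $O(d_n^{1-r})$, not $O(d_n)$. Since the contradiction hypothesis only gives $\Lambda_n d_n<\mathcal{F}(u,h)$ (with $\Lambda_n$ the penalty), $d_n$ may decay arbitrarily fast, and one cannot force $Cd_n^{1-r}<\Lambda_n d_n$. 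The elastic error in \eqref{elastic_energy_convergence2} is worse: it involves $\int_{E_n}|Ev_n^k|^2$ over strips of measure $O(\epsilon_n)$, which absolute continuity makes $o(1)$ but not $O(\epsilon_n)$ without an $L^\infty$ bound on $Ev_n^k$---and your competitors are arbitrary, so no such bound holds uniformly in $n$. The detour through a Lipschitz recovery sequence via Proposition~\ref{prop:relax} compounds this: the recovery gives convergence without any rate, so the constant $C$ cannot be made uniform in the diagonal index. The obstacle you single out in the dewetting regime is real, but your proposed cure (construction \eqref{hndefinition}) trades it for the sublinear $d_n^{1-r}$ rate; a genuinely linear modification (as in \cite[Lemma~3.2]{FFLM2}) is needed.
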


\begin{proof}
The proof strategy is analogous to \cite[Proof of Proposition 3.1]{FFLM2}, and consists in first establishing the existence of a solution $(u_{\lambda},h_{\lambda})$ of the minimum problem ($M_{\lambda}$) in the right side of \eqref{eq:penprob} for every fixed $\lambda>0$, then in observing that
\be{eq:firstinequality}
\mathcal{F}(u, h)\geq\mathcal{F}(u_{\lambda}, h_{\lambda})
\ee
since $(u, h)$ is admissible for ($M_{\lambda}$), and finally in showing that there exists $\lambda_0>0$ such that the reverse inequality 
\be{eq:reverseinequality}
\mathcal{F}(u, h)\leq\mathcal{F}(u_{\lambda}, h_{\lambda})
\ee
holds true for every $\lambda\geq \lambda_0$. Since $(u,h)$ is a $\mu$-local minimizer of the functional $\mathcal{F}$, to prove \eqref{eq:reverseinequality}  it is enough to show that $|\Omega_{g_{\lambda}}^+|=|\Omega_{h}^+|$ for all $\lambda\geq \lambda_0$ .

The key modification in our setting consists in observing that any sequence $\{(u_k,h_k)\}\subset X$, for which there exists a constant $C$ such that 
$$
\sup_{k\in \N}\mathcal{F}(u_k,h_k)<C,
$$
satisfies the uniform bound 
\be{eq:boundh}
\mathcal{H}^1(\Gamma_{h_k})\leq M(C),
\ee
where the constant $M(C)>0$ is given by 
$$
M(C):=\begin{cases}
\frac{C}{\beta\gamma_f} &\textrm{if $\beta\not=0$,}\\
\frac{C}{\min\{\gamma_f, \gamma_{fs}\}}&\textrm{if $\beta=0$,}
\end{cases}
$$
and where $\beta$ is the quantity defined in \eqref{eq:beta}. Note that, by \eqref{eq:ass-gammas}, when $\beta=0$ then $\gamma_{fs}=\gamma_s>0$. The bound \eqref{eq:boundh} is used a first time to prove the sequential compactness of any minimizing sequence for the problem ($M_{\lambda}$), and to deduce the existence of a minimizer $(u_{\lambda},h_{\lambda})$. In view of \eqref{eq:firstinequality}, an application of \eqref{eq:boundh} to the sequence $\{(u_{\lambda}, h_{\lambda})\}$ with $C=\mathcal{F}(u,h)$ allows to check that $\{(u_{\lambda}, h_{\lambda})\}$ satisfies the assumptions of \cite[Lemma 3.2]{FFLM2}, and to complete the proof of \eqref{eq:reverseinequality}.

\end{proof}

We are now ready to establish the internal-ball condition for optimal profiles.

%The analogue to \cite[Proposition 3.3]{FFLM2} holds in our framework.
\begin{proposition}[Internal-ball condition]
\label{thm:palla-interna}
Let $(u,h)\in X$ be a $\mu$-local minimizer for the functional $\mathcal{F}$. Then, there exists $\rho_0>0$ such that for every $z\in \overline{\Gamma}_h$ we can choose a point $P_z$ for which  $B(P_z,\rho_0)\cap((a,b)\times \R)\subset \Omega_h$, and 
$$\partial B(P_z,\rho_0)\cap\overline{\Gamma}_h=\{z\}.$$
\end{proposition}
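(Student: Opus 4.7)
The plan is to adapt the Chambolle--Larsen argument of \cite{CL}, as employed in the thin-film context in \cite{FFLMi,FFLM2}, to our setting, which presents the additional complications of possibly different elastic tensors, a jump in the surface density $\varphi$ across the film/substrate interface $\{y=0\}$, and the interface term $\gamma_{\rm fs}(b-a)$. The first step is to exploit Proposition \ref{prop:penalization}: for $\lambda\geq \lambda_0$, the minimizer $(u,h)$ solves the penalized problem, so that any admissible competitor $(v,g)\in X$ with $|\Omega_g\Delta\Omega_h|\leq\mu/2$ satisfies $\mathcal{F}(v,g)+\lambda||\Omega_g^+|-|\Omega_h^+||\geq\mathcal{F}(u,h)$.

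I would then argue by contradiction. If no uniform radius $\rho_0$ exists, one can extract a sequence $z_n\in \overline{\Gamma}_h$ and radii $\rho_n\downarrow 0$ for which the following dichotomy holds for every choice of center $P_n$ with $B(P_n,\rho_n)\cap((a,b)\times\R)\subset\Omega_h$ and $z_n\in\partial B(P_n,\rho_n)$: the boundary $\partial B(P_n,\rho_n)$ meets $\overline{\Gamma}_h$ at an additional point $w_n\neq z_n$. By a standard compactness argument one may pick, for each $n$, such a ball that is maximal in a suitable sense, and then define a competitor $(\tilde u_n,\tilde h_n)\in X$ by removing from $\Omega_h$ the ``lens'' region $L_n$ enclosed between an arc of $\partial B(P_n,\rho_n)$ joining $z_n$ and $w_n$ and the portion of $\overline{\Gamma}_h$ cut off by that arc, and by setting $\tilde u_n:=u|_{\Omega_{\tilde h_n}}$. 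A direct geometric check gives that $\tilde h_n\in AP(a,b)$, $(\tilde u_n,\tilde h_n)\to (u,h)$ in $X$, and $|\Omega_{\tilde h_n}\Delta\Omega_h|=|L_n|\leq C\rho_n^2\leq\mu/2$ for $n$ large.

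The energy comparison would then proceed contribution by contribution. The bulk term decreases, since $\tilde u_n$ is the restriction of $u$ to the smaller domain and $W_0\geq 0$. The constant $\gamma_{\rm fs}(b-a)$ is unchanged. For the surface energy, the key geometric observation is that the replaced arc of $\partial B(P_n,\rho_n)$ is strictly shorter than the portion of $\overline{\Gamma}_h$ it removes, by a quantitative amount of order $\rho_n$; combined with the uniform lower bound $\varphi\geq \beta\gamma_f$ coming from \eqref{eq:beta} together with \eqref{eq:ass-gammas}, this yields a surface-energy gain of at least $c\rho_n$. When $z_n\in \Gamma_h^{cut}$ the gain is doubled thanks to the factor $2\gamma_f$ in \eqref{filmenergyfinal}. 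Since the volume penalty contributes at most $\lambda C\rho_n^2$, which is negligible compared to $c\rho_n$ for $n$ large, one obtains $\mathcal{F}(\tilde u_n,\tilde h_n)+\lambda||\Omega_{\tilde h_n}^+|-|\Omega_h^+||<\mathcal{F}(u,h)$, contradicting the penalized minimality.

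The principal technical obstacle I anticipate is the geometric construction ensuring that $\tilde h_n$ remains in $AP(a,b)$: the lens removal must be chosen consistently with the lower semicontinuous envelope $h^-$ at jump and cut points of $h$, and a careful case analysis is needed when the ball $B(P_n,\rho_n)$ straddles the line $\{y=0\}$, where both $\varphi$ and $\C(y)$ have jumps. In particular, one has to verify that the surface-energy gain remains linear in $\rho_n$ (not $\rho_n^2$) uniformly in $n$, and to handle separately the degenerate case $\beta=0$, in which $\gamma_s=\gamma_{\rm fs}$ and parts of $\tilde{\Gamma}_h\cap\{y=0\}$ carry vanishing density, so that the required gain must come entirely from portions of $\overline{\Gamma}_h$ lying strictly above the substrate.
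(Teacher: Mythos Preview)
Your overall plan---penalize via Proposition~\ref{prop:penalization}, argue by contradiction, and build a competitor by excising a small region---matches the paper's. The gap is in the energy comparison. You assert that replacing a piece of $\overline{\Gamma}_h$ by an arc of $\partial B(P_n,\rho_n)$ yields a surface-energy gain \emph{of order $c\rho_n$}, to be played against a volume penalty of order $\rho_n^2$. This linear-versus-quadratic mechanism is not what drives the Chambolle--Larsen argument, and in fact the claimed linear gain is not available: the two contact points $z_n,w_n$ may lie at distance $d_n\ll\rho_n$, and the piece of profile joining them can be arbitrarily close to the chord (or to the short arc), so that both the length gain and the excised area are $o(\rho_n)$. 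In such a regime your comparison gives no contradiction. Two secondary but related issues: (i) a domain bounded above by a circular arc is generally \emph{not} a subgraph, so your competitor need not belong to $X$; (ii) the bound $|L_n|\leq C\rho_n^2$ is unjustified---the profile between the two contact points may carry large jumps or cuts, so the enclosed region is not a priori controlled by $\rho_n$.

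The paper's argument (following \cite{CL,FFLM2}) fixes all three points simultaneously. The competitor is built by replacing $h$ with the \emph{chord} (in the new case $y_1^r=y_2^r=0$ this means setting $h^r\equiv 0$ on $(x_1^r,x_2^r)$), which automatically yields an element of $X$. The area $|D^r|$ of the excised region is shown to be small not by a crude $\rho^2$ bound but via the estimate \eqref{eq:estimateL} on $\HH^1(\Gamma_{P_1^r,P_2^r})$, obtained after isolating the finite set $A$ of large jumps and cuts and choosing $r$ accordingly. Minimality then gives $\HH^1(\Gamma_{P_1^r,P_2^r})-\beta\,\HH^1([P_1^r,P_2^r])\leq \tfrac{\lambda_0}{\gamma_f}|D^r|$, and the isoperimetric inequality gives $|D^r|\leq \tfrac{1}{4\pi}(\theta^r+1)^2\,\HH^1([P_1^r,P_2^r])^2$ with $\theta^r:=\HH^1(\Gamma_{P_1^r,P_2^r})/\HH^1([P_1^r,P_2^r])>1$. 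Combining the two forces $(\theta^r+1)^2/(\theta^r-\beta)^2\to\infty$, hence $\theta^r\to\beta<1$, contradicting $\theta^r>1$. Note that this handles $\beta=0$ with no extra work, so your anticipated separate treatment of that case is unnecessary.
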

\begin{proof}
Let $\lambda_0$ be as in Proposition \ref{prop:penalization} and let $\beta$ be the quantity defined in  \eqref{eq:beta}. The case in which $\beta=1$ can be treated as in \cite[Proposition 3.3]{FFLM2}, despite the fact that in our setting the two elasticity tensors $\C_f$ and $\C_s$ are allowed to be different. Also in the case $\beta<1$ the argument of \cite[Proposition 3.3]{FFLM2} can be implemented. We highlight the main differences with respect to the case $\beta=1$ for convenience of the reader.

We begin by proving the following claim: there exists $\rho_0>0$ such that, for any $P\in \R^2$ for which $B(P,\rho_0)\cap((a,b)\times \R)\subset \Omega_h$, the intersection between $\partial B(P,\rho_0)$ and $\overline{\Gamma}_h$ contains at most one point. Once this claim is proved, the uniform internal-ball condition of the assert follows then by the argument of \cite[Lemma 2]{CL}.

By contradiction, assume that for every $r>0$ there exists $\rho_r<\tfrac{r}{2}$ for which three points $P_1^r$, $P_2^r$, and $P_r$ can be chosen so that 
$$B(P_r,\rho_r)\cap((a,b)\times \R)\subset \Omega_h$$
and 
$$\partial B(P_r,\rho_r)\cap\overline{\Gamma}_h\supset\{P_1^r,P_2^r\}.$$
Denote by $[P_1^r,P_2^r]$ the segment  
$$[P_1^r,P_2^r]:=\{P_1^r+t(P_2^r-P_1^r)\,:\, 0\leq t\leq1\}$$ and by $\Gamma_{P_1^r,P_2^r}$ the set
$$
\Gamma_{P_1^r,P_2^r}:=\left(\tilde{\Gamma}_h\cap([x_1^r,x_2^r]\times\R)\right)\cup\displaystyle\bigcup_{i=1}^2\big\{(x_i^r,y)\,:\, y_i^r\leq y\leq h^+(x_i^r)\big\}
$$
where $P_1^r=:(x_1^r,y_1^r)$ and $P_2^r=:(x_2^r,y_2^r)$.  The case in which either $y_1^r\neq 0$ or $y_2^r\neq 0$ follows exactly as in \cite[Proposition 3.3]{FFLM2}, thus we assume that $y_1^r=y_2^r=0$. Consider the pair $(u,h^r)\in X$ with $h^r$ defined by
$$
h^r:=\begin{cases}
\displaystyle0&\textrm{if $x_1^r<x<x_2^r$},\\
h(x) &\textrm{otherwise}.
\end{cases}
$$
Note that $\Omega_h\setminus\Omega_{h^r}\subset \overline{D}^r$ where $D^r$ is the portion of $\R^+$ enclosed by the curve $\Gamma_{P_1^r,P_2^r}\cup[P_1^r,P_2^r]$. 

Fix
\be{eq:choice-ep0}
0<\ep_0<\frac{\mu}{4(b-a)},
\ee
and  
\be{eq:choice-ep}
0<\ep<\frac{\ep_0}{2},
\ee
and consider the finite set $A\subset (a,b)$ such that 
$$
\sum_{x\in J(h)\setminus A} (h^+(x)-h^-(x))+\sum_{x\in C(h)\setminus A} (h^-(x)-h(x))<\frac{\ep}{2}
$$
(see \cite[(3.33) and (3.34)]{FFLM2}). Let $r_0>0$ be such that
$$
r_0<\min\{|x-x'|\,:\, \textrm{$x\not=x'$ for any $x$, $x'\in A$}\}
$$
and
$$
\sup\{\mathscr{M}(I\setminus A)\,:\, \textrm{$I\in\mathcal{I}$}\}<\frac{\ep}{2}
$$
where $\mathcal{I}$ is the family of intervals $I\subset(a,b)$ with $|I|\leq r_0$, and $\mathscr{M}$ is the measure obtained by projecting $\mathcal{H}^1_{|_{\Gamma_h}}$ on the $x$-axis. By choosing 
\be{eq:choice-r}
r:=\min\left\{\frac{\ep_0}{4}, \frac{r_0}{2}\right\},
\ee
it follows that the set $[P_1^r,P_2^r]\cap A$ contains at most one point. Arguing as in \cite[Proof of (3.37)]{FFLM2} we deduce the estimate
\be{eq:estimateL}
\mathcal{H}^1(\Gamma_{P_1^r,P_2^r})\leq 2\ep+r.
\ee
In view of \eqref{eq:choice-ep}, \eqref{eq:choice-r}, and \eqref{eq:estimateL}, there holds
\begin{align*}
h^+(x)-h^r(x)&\leq \mathcal{H}^1(\Gamma_{P_1^r,P_2^r})+\mathcal{H}^1([P_1^r,P_2^r])\\
&\leq 2\ep+r+2\rho\leq 2\ep+2r\leq2\ep_0
\end{align*}
for every $x\in (x_1^r,x_2^r)$, and hence
\be{eq:meas-Dr}
|D^r|=\int_{x_1^r}^{x_2^r}(h^+(x)-h^r(x))\,dx\leq 2\ep_0(b-a),
\ee
and by \eqref{eq:choice-ep0},
$$|\Omega_h^+\Delta\Omega_{h^r}^+|\leq\frac{\mu}{2},$$
namely $(u,h^r)$ is an admissible competitor for the minimum problem \eqref{eq:penprob} with \mbox{$\lambda=\lambda_0$}. The minimality of $(u,h)$ yields the estimate
\be{eq:estimate1-int-ball}
\mathcal{F}(u,h)\leq\mathcal{F}(u,h^r)+\lambda_0||\Omega_{h^r}^+|-|\Omega_h^+||.
\ee
On the other hand,
\begin{align}
\label{eq:estimate2-int-ball}
\mathcal{F}(u,h^r)&=\int_{\Omega_{h^r}}W_0(y, Eu(x,y)-E_0(y))\,dx\,dy+\int_{\tilde{\Gamma}_{h^r}}\varphi(y)\,d\HH^1\\
&\nonumber\quad+2\gamma_f\HH^1(\Gamma_{h^r}^{cut})+\gamma_{fs}(b-a)\\
&\nonumber\leq \int_{\Omega_{h}}W_0(y, Eu(x,y)-E_0(y))\,dx\,dy{+}\gamma_f\big(\HH^1(\tilde{\Gamma}_h\cap\{y>0\}){-}\HH^1(\Gamma_{P_1^r,P_2^r})\big),\\
&\nonumber\quad+\min\{\gamma_f,\gamma_s-\gamma_{fs}\}\big(\HH^1(\tilde{\Gamma}_h\cap\{y=0\})+\HH^1([P_1^r,P_2^r])\big)\\
&\nonumber\quad+2\gamma_f\HH^1(\Gamma_{h}^{cut})+\gamma_{fs}(b-a)\\
&\nonumber=\mathcal{F}(u,h)-\gamma_f\big(\HH^1(\Gamma_{P_1^r,P_2^r})-\beta\HH^1([P_1^r,P_2^r])\big),
\end{align}
where $\beta$ is the quantity defined in \eqref{eq:beta}. By combining \eqref{eq:estimate1-int-ball} and \eqref{eq:estimate2-int-ball} we deduce that
\be{eq:isoperimetricopposite}
\HH^1(\Gamma_{P_1^r,P_2^r})-\beta\HH^1([P_1^r,P_2^r])\leq\frac{\lambda_0}{\gamma_f}|D^r|.
\ee
Arguing as in the proof of \cite[Lemma 1]{CL}, the isoperimetric inequality in the plane (see \cite{AFP}) yields
\be{eq:isoperimetric}
\sqrt{|D^r|}\leq\frac{\mathcal{H}^1(\partial D^r)}{2\sqrt{\pi}}=\frac{(\theta^r +1)\mathcal{H}^1([P_1^r,P_2^r])}{2\sqrt{\pi}}
\ee
where 
\be{eq:theta}
\theta^r:=\frac{\mathcal{H}^1(\Gamma_{P_1^r,P_2^r})}{\mathcal{H}^1([P_1^r,P_2^r])}>1.
\ee
Substituting \eqref{eq:isoperimetricopposite} in \eqref{eq:isoperimetric} we obtain the estimate
$$
|D^r|\leq\frac{\lambda_0^2}{4\pi\gamma_f^2}\frac{(\theta^r+1)^2}{(\theta^r-\beta)^2}|D^r|^2.
$$
In view of \eqref{eq:meas-Dr},
$$|D^r|\leq\frac{2\lambda_0^2\ep_0(b-a)}{4\pi\gamma_f^2}\frac{(\theta^r+1)^2}{(\theta^r-\beta)^2}|D^r|,$$
which in turn implies
$$\frac{2\lambda_0^2\ep_0(b-a)}{4\pi\gamma_f^2}\frac{(\theta^r+1)^2}{(\theta^r-\beta)^2}\geq 1.$$
By the previous inequality, as $\ep_0$ vanishes, then $\theta^r$ must approach $\beta$. Since $\beta<1$, we have a contradiction with \eqref{eq:theta}. This completes the proof of the claim and of the proposition.

\end{proof}

\noindent We notice that in view of Proposition \ref{thm:palla-interna}  the upper-end point of each \emph{cut} is a cusp point (see Figure \ref{graphs}). 

The following proposition is a consequence of the internal-ball condition.

\begin{proposition}
\label{prop:loc-lip}
Let $(u,h)\in X$ be a $\mu$-local minimizer for the functional $\mathcal{F}$. Then for any $z_0\in \overline{\Gamma}_h$ there exist an orthonormal basis $\mathbf{v}_1,\mathbf{v}_2\in\R^2$, and a rectangle $$Q:=\left\{z_0+s\mathbf{v}_1+t\mathbf{v}_2:\,-a'<s<a',\,-b'<t<b'\right\},$$
$a',b'>0$, such that $\Omega_h\cap Q$ has one of the following two representations:
\begin{itemize}
\item[1.] There exists a Lipschitz function $g:(-a',a')\to(-b',b')$ such that $g(0)=0$ and $$\qquad\Omega_h\cap Q:=\left\{z_0+s\mathbf{v}_1+t\mathbf{v}_2:\,-a'<s<a',\,-b'<t<g(s)\right\}\cap ((a,b)\times \R).$$
In addition, the function $g$ admits left and right derivatives at all points that are, respectively, left and right continuous.
\item[2.] There exist two Lipschitz functions $g_1,g_2:[0,a')\to(-b',b')$ such that $g_i(0)=(g_i)'_+(0)=0$ for $i=1,2$, $g_1\leq g_2$, and
               $$\quad\qquad\Omega_h\cap Q:=\left\{z_0+s\mathbf{v}_1+t\mathbf{v}_2:\,0<s<a',\,-b'<t<g_1(s)\text{ or }g_2(s)<t<b'\right\}.$$
In addition, the functions $g_1,g_2$ admit left and right derivatives at all points that are, respectively, left and right continuous.
\end{itemize}
\end{proposition}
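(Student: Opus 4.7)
The plan is to derive the local structure of $\overline{\Gamma}_h$ near $z_0$ directly from the uniform internal-ball condition of Proposition~\ref{thm:palla-interna}. Pick the ball $B(P_{z_0},\rho_0)\subset\Omega_h$ given by that proposition and set $\mathbf{v}_2:=(z_0-P_{z_0})/\rho_0$, so that $\mathbf{v}_2$ plays the role of an outward normal to $\Omega_h$ at $z_0$, and let $\mathbf{v}_1$ be orthogonal to it. The cornerstone is the following uniform cone estimate: for every pair $z,z'\in\overline{\Gamma}_h$ in a small ball around $z_0$, since $B(P_z,\rho_0), B(P_{z'},\rho_0)\subset\Omega_h$ one has $|z'-P_z|\geq\rho_0$ and $|z-P_{z'}|\geq\rho_0$; provided the rectangle $Q$ is chosen small enough compared to $\rho_0$, this geometric condition forces the chord $z-z'$ to make an angle with $\mathbf{v}_1$ bounded by some constant strictly less than $\pi/2$.

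The dichotomy between case 1 and case 2 corresponds to whether $z_0\notin\Gamma_h^{cusp}$ or $z_0\in\Gamma_h^{cusp}$. In case 1, the cone estimate simultaneously ensures that each line $L_s:=\{z_0+s\mathbf{v}_1+t\mathbf{v}_2:t\in\R\}$ meets $\overline{\Gamma}_h\cap Q$ in at most one point, so the function $g(s)$ is well defined, and that $g$ is Lipschitz with constant depending only on $\rho_0$ and the dimensions of $Q$. The existence and one-sided continuity of $g'_{\pm}$ would then follow from a compactness argument: as $s'\to s^{\pm}$, the balls $B(P_{(s',g(s'))},\rho_0)$ converge in the Hausdorff distance to a ball tangent to the graph at $(s,g(s))$ from the appropriate side, and its tangent direction continuously determines the one-sided derivative of $g$.

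In case 2, one would instead take $\mathbf{v}_1$ along the cut emanating from $z_0$ (pointing into the film) and $\mathbf{v}_2$ perpendicular to it. The two sheets of $\overline{\Gamma}_h$ that meet tangentially at the cusp can then be treated separately: on each of them the cone argument of case 1 produces a Lipschitz map $g_i:[0,a')\to(-b',b')$ with $g_1\leq g_2$. The matching conditions $g_1(0)=g_2(0)=0=(g_i)'_+(0)$ reflect the fact that $B(P_{z_0},\rho_0)$, being tangent to both sheets at $z_0$, forces them to share the common tangent $\mathbf{v}_1$; emptiness of $\Omega_h\cap Q$ for $s<0$ reduces to the fact that beyond the cusp tip there is no film, which follows from the very definition of $\Gamma_h^{cusp}$ combined with the internal-ball condition at $z_0$.

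The step I expect to be the main obstacle is verifying that the above dichotomy is exhaustive, namely that the local tangent cone of $\overline{\Gamma}_h$ at $z_0$ always has one of the two prescribed forms and that no more degenerate configuration can occur. This amounts to excluding, via the combination of Proposition~\ref{thm:palla-interna}, the decomposition $\Gamma_h=\Gamma_h^{graph}\sqcup\Gamma_h^{jump}\sqcup\Gamma_h^{cut}$, and the characterization of $\Gamma_h^{cusp}$, pathologies such as several disconnected branches of $\overline{\Gamma}_h$ emanating from $z_0$ in different directions, or vertical collapse of the rectangle $Q$ inside a cut where $\overline{\Gamma}_h$ would fail to separate $\Omega_h\cap Q$ into the announced connected components.
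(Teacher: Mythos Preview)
Your approach is exactly what the paper invokes: it gives no self-contained argument here but simply refers to \cite[Lemma~3]{CL} and \cite[Proposition~3.5]{FFLM2}, whose proofs proceed precisely by extracting a uniform cone property from the internal-ball condition of Proposition~\ref{thm:palla-interna} and then reading off the Lipschitz graph(s) together with their one-sided derivatives from the family of tangent balls $B(P_z,\rho_0)$.

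One refinement of the dichotomy you correctly flag as the delicate point: it is \emph{not} governed by membership in $\Gamma_h^{cusp}$. Interior points of a cut, namely $(x_0,y_0)$ with $h(x_0)<y_0<h^-(x_0)$, lie outside $\Gamma_h^{cusp}$ but certainly do not fall under case~1 (the set $\Omega_h\cap Q$ is $Q$ minus a segment, not a subgraph), and your description of case~2 via ``$\mathbf{v}_1$ along the cut emanating from $z_0$'' tacitly assumes $z_0$ is a cusp tip. The criterion used in \cite{CL} is purely geometric: one checks whether $\overline{\Gamma}_h$ near $z_0$ stays on one side of the tangent line to $B(P_{z_0},\rho_0)$ at $z_0$ (case~1) or approaches it from both sides (case~2); cut interiors and bottom endpoints of cuts then require the two-sheet description with $g_1\equiv g_2$ on a suitable half-interval, and one must also verify that $\Omega_h\cap Q\cap\{s\le 0\}$ is empty there, which is where the graph structure of $h$ (and not just the internal-ball condition) enters.
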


For the proof of Proposition \ref{prop:loc-lip} we refer the reader to  \cite[Lemma 3]{CL} and \cite[Proposition 3.5]{FFLM2}. In particular Proposition \ref{prop:loc-lip} entails that the set 
$$\Gamma_h^{reg}=\Gamma_h\setminus(\Gamma_h^{cusp}\cup\Gamma_h^{cut})$$
is locally Lipschitz.

\section*{Acknowledgements}

The authors thank the Center for Nonlinear Analysis (NSF Grant No. DMS-0635983) and the Erwin Schr\"odinger Institute (Thematic Program: Nonlinear Flows), where part of this research was carried out. P. Piovano acknowledges support from the Austrian Science Fund (FWF) project P~29681 and the fact that this work has been funded by the Vienna Science and Technology Fund (WWTF), the City of Vienna, and Berndorf Privatstiftung through Project MA16-005. E. Davoli acknowledges the support of the Austrian Science Fund (FWF) project P~27052 and of the SFB project F65 ``Taming complexity in partial differential systems".

\end{document}